\newtheorem{lemma}{Lemma}[section]
\newtheorem{prop}[lemma]{Proposition}
\newtheorem{thm}[lemma]{Theorem}
\newtheorem{cor}[lemma]{Corollary}
\theoremstyle{definition}
\theoremstyle{remark}
\numberwithin{equation}{section} \numberwithin{table}{section}
\begin{document}

\title[Codings for self-similar sets with positive Lebesgue measure]{On the cardinality and complexity of the set of codings for self-similar sets with positive Lebesgue measure}
\author{Simon Baker}
\address{Utrecht University, P.O Box 80125, 3508 TC Utrecht, The Netherlands. E-mail:
simonbaker412@gmail.com}

\date{\today}
\subjclass[2010]{28A80 , 37A45}
\keywords{Iterated function systems, Beta-expansions} 

\begin{abstract}
Let $\lambda_{1},\ldots,\lambda_{n}$ be real numbers in $(0,1)$ and $p_{1},\ldots,p_{n}$ be points in $\mathbb{R}^{d}$. Consider the collection of maps $f_{j}:\mathbb{R}^{d}\to\mathbb{R}^{d} $ given by $$f_{j}(x)=\lambda_{j} x +(1-\lambda_{j})p_{j}.$$ It is a well known result that there exists a unique compact set $\Lambda\subset \mathbb{R}^{d}$ satisfying $\Lambda=\cup_{j=1}^{n} f_{j}(\Lambda).$ Each $x\in \Lambda$ has at least one coding, that is a sequence $(\epsilon_{i})_{i=1}^{\infty}\in \{1,\ldots,n\}^{\mathbb{N}}$ that satisfies $\lim_{N\to\infty}f_{\epsilon_{1}}\cdots f_{\epsilon_{N}} (0)=x.$ 

We study the size and complexity of the set of codings of a generic $x\in \Lambda$ when $\Lambda$ has positive Lebesgue measure. In particular, we show that under certain natural conditions almost every $x\in\Lambda$ has a continuum of codings. We also show that almost every $x\in\Lambda$ has a universal coding. 

Our work makes no assumptions on the existence of holes in $\Lambda$ and improves upon existing results when it is assumed $\Lambda$ contains no holes. 
\end{abstract}

\maketitle

\section{Introduction}
Let $\lambda\in(\frac{1}{2},1)$ and $I_{\lambda}:=[0,\frac{\lambda}{1-\lambda}].$ Each $x\in I_{\lambda}$ admits a sequence $(\epsilon_{i})_{i=1}^{\infty}\in\{0,1\}^{\mathbb{N}}$ such that $$x=\sum_{i=1}^{\infty}\epsilon_{i}\lambda^{i}.$$  Such a sequence is called a \textit{$\lambda$-expansion} for $x.$ Expansions of this form were pioneered in the papers of R\'{e}nyi \cite{Renyi} and Parry \cite{Parry}. We can study $\lambda$-expansions via the iterated function system defined by the maps $f_{0}(x)=\lambda x$ and $f_{1}(x)=\lambda x+ \lambda.$ It is a straightforward exercise to show that $$f_{\epsilon_{1}}\cdots f_{\epsilon_{N}}(0)=\sum_{i=1}^{N}\epsilon_{i}\lambda^{i}.$$ Therefore $\lim_{N\to \infty}f_{\epsilon_{1}}\cdots f_{\epsilon_{N}}(0)=x$ if and only if $(\epsilon_{i})_{i=1}^{\infty}$ is a $\lambda$-expansion for $x.$

In \cite{Erdos} it was shown that if $\lambda\in(\frac{\sqrt{5}-1}{2},1)$ then every $x\in(0,\frac{\lambda}{1-\lambda})$  has a continuum of $\lambda$-expansions. The endpoints of $I_{\lambda}$ trivially have a unique expansion. In \cite{DaKa} the value $\frac{\sqrt{5}-1}{2}$ was shown to be sharp in the following sense: If $\lambda\in(\frac{1}{2},\frac{\sqrt{5}-1}{2})$ then there exists $x\in(0,\frac{\lambda}{1-\lambda})$ with a unique $\lambda$-expansion. The size of the set of points with unique $\lambda$-expansion was studied further in \cite{GlenSid}, amongst other things it was shown that the set of $x\in I_{\lambda}$ with unique $\lambda$-expansion has positive Hausdorff dimension when $\lambda\in(\frac{1}{2},\lambda^{*})$. Here $\lambda^{*}\approx 0.559$ is the reciprocal of the Komornik Loreti constant introduced in \cite{KomLor}. However, in \cite{Sid} it was shown that Lebesgue almost every $x\in I_{\lambda}$ has a continuum of $\lambda$-expansions for any $\lambda\in(\frac{1}{2},1)$. This almost every result was later generalised to a class of IFS's in \cite{Sidorov4}. We now give details of their generalisation.

Let $\lambda_{1},\ldots,\lambda_{n}$ be real numbers in $(0,1)$ and $p_{1},\ldots,p_{n}$ be points in $\mathbb{R}^{d}$. Consider the collection of maps $f_{j}:\mathbb{R}^{d}\to\mathbb{R}^{d}$ given by 
\begin{equation}
\label{scaling equation}
f_{j}(x)=\lambda_{j} x +(1-\lambda_{j})p_{j}.
\end{equation} There exists a unique compact set $\Lambda\subset \mathbb{R}^{d}$ that satisfies $\Lambda=\cup_{j=1}^{n} f_{j}(\Lambda).$ We refer to $\Lambda$ as the \textit{attractor} for the collection of maps $\{f_{j}\}_{j=1}^{n},$ or when the collection of maps is obvious just the attractor.  Each $x\in \Lambda$ admits a sequence $(\epsilon_{i})_{i=1}^{\infty}\in \{1,\ldots,n\}^{\mathbb{N}}$ such that $\lim_{N\to\infty}f_{\epsilon_{1}}\cdots f_{\epsilon_{N}} (0)=x.$  We refer to such a sequence as a \textit{coding for $x$}. Moreover, the set of $x$ which have a coding is precisely $\Lambda.$ When $\lambda_{1}=\cdots =\lambda_{n}$ we will say that we are in the \textit{homogeneous case}. When there exists $\lambda_{i},\lambda_{j}$ such that $\lambda_{i}\neq \lambda_{j}$ we will say that we are in the \textit{inhomogeneous case}. When we are in the homogeneous case we will denote the common scaling ratio by $\lambda.$

Let $\Omega$ denote the convex hull of $\{p_{1},\ldots,p_{n}\}.$ Without loss of generality we may assume that the dimension of $\Omega$ is $d.$ In \cite{Sidorov4} the author considers the homogeneous case where $\Lambda=\Omega,$ i.e., the case when the attractor has no holes. In particular they show that the property $\Lambda=\Omega$ holds for all $\lambda\geq\frac{d}{d+1}.$ They also proved the following result.

\begin{thm}
\label{Nik's theorem}
Assume $\Lambda=\Omega$ and that we are in the homogeneous case. If there exists $1\leq k<l\leq n$ such that a vertex of $f_{k}(\Omega)$ belongs to the interior of $f_{l}(\Omega)$ then Lebesgue almost every $x\in \Lambda$ has a continuum of codings, and the exceptional set has Hausdorff dimension strictly less than $d$. 
\end{thm}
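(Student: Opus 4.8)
The plan is to show that the set of points with a continuum of codings has full measure by exhibiting, for almost every $x$, a ``branching'' structure in its codings, and to bound the dimension of the exceptional set separately. Let me set up the geometry. Since a vertex $v$ of $f_{k}(\Omega)$ lies in the interior of $f_{l}(\Omega)$, there is a point in $\Lambda=\Omega$ that is covered by two distinct cylinders with nonempty overlap of positive measure. First I would use the hypothesis to locate an open set $U\subset \mathrm{int}\big(f_{k}(\Omega)\cap f_{l}(\Omega)\big)$: near the vertex $v$, the set $f_{k}(\Omega)$ occupies a full-dimensional cone while $f_{l}(\Omega)$ contains a neighbourhood of $v$, so their intersection has nonempty interior. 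Every $y\in U$ then admits at least two codings whose first symbols are $k$ and $l$ respectively, since $y\in f_{k}(\Omega)\cap f_{l}(\Omega)=f_{k}(\Lambda)\cap f_{l}(\Lambda)$. This gives an \emph{overlap region} $U$ of positive Lebesgue measure where branching occurs.

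Next I would exploit the self-similar dynamics to propagate branching to almost every point. For a finite word $\omega=(\omega_{1},\ldots,\omega_{m})$ write $f_{\omega}=f_{\omega_{1}}\cdots f_{\omega_{m}}$. The idea is that a coding $(\epsilon_i)$ of a point $x$ forces the orbit points $f_{\epsilon_1}^{-1}\cdots f_{\epsilon_N}^{-1}(x)$ to remain in $\Lambda$, and whenever this orbit enters the overlap region $U$ one may switch the continuation between the two alternatives, each such switch producing genuinely distinct codings. Thus it suffices to show that for Lebesgue almost every $x$, \emph{every} coding visits $U$ infinitely often; combined with a counting argument this yields a full binary tree of choices and hence a continuum of codings. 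To establish the almost-everywhere visiting property I would argue ergodically: since $\Lambda=\Omega$ has positive measure and the maps are contractions whose inverses expand, the natural pushforward dynamics on $\Omega$ is (up to the multivalued coding) measure-expanding, and a Borel--Cantelli type argument shows that the cylinders $f_{\omega}(U)$, which have total measure comparable to $\sum_{\omega}|\det Df_{\omega}|\,|U|$, cover almost every point infinitely often. Concretely, I would show the complement---points whose codings \emph{eventually avoid} $U$---is contained in a countable union of sets each carried by a subsystem that never enters the overlap, and that each such subsystem has attractor of strictly smaller Hausdorff dimension.

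For the dimension bound on the exceptional set $E$, I would identify $E$ with the set of $x$ admitting a coding that enters $U$ only finitely often. After discarding a finite prefix, such an $x$ lies in some $f_{\omega}(\Lambda')$ where $\Lambda'$ is the attractor of the restricted coding system in which the forbidden overlap transition never occurs. Removing an open set of transitions of positive measure forces the restricted system to be a genuine subsystem, and standard dimension theory for self-similar sets (or a pressure/pressure-drop estimate via the positivity of $|U|$) gives $\dim_H \Lambda' < d$. Since $E$ is a countable union of affine images $f_{\omega}(\Lambda')$, and affine maps preserve Hausdorff dimension, $\dim_H E = \dim_H \Lambda' < d$, which also immediately implies $E$ is Lebesgue null and hence almost every $x$ has a continuum of codings.

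The main obstacle I anticipate is the propagation step: making precise that a single entry into $U$ yields two \emph{distinct} codings and that infinitely many entries yield a continuum rather than merely countably many, since naively switching at different times could produce coincident sequences. I would handle this by a careful injectivity argument, encoding the choices at successive visits to $U$ as an independent binary label and verifying that distinct label sequences produce distinct codings because the switch alters the symbol at a determined position. The quantitative heart of the argument---showing the total measure of the overlap cylinders forces infinitely many visits almost everywhere, and simultaneously that restricting to finitely many visits strictly drops the dimension---is where the positivity of $|U|$ (guaranteed by the vertex-in-interior hypothesis) must be converted into a strict inequality, and I expect this to require the most care.
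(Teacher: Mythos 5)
There is a genuine gap, and it comes from reducing to the wrong exceptional set. Your sets $\Lambda'$ and $E$ are defined by an \emph{existential} condition over codings ("$x$ admits a coding that eventually avoids $U$"), and for such sets the key technical claims in your sketch do not go through. First, the dimension step: $\Lambda'$ is \emph{not} the attractor of a finite sub-IFS, so "standard dimension theory for self-similar sets" does not apply --- avoiding the open set $U$ is a condition on the entire tail of the coding (the point $\lim_N f_{\epsilon_{m+1}}\cdots f_{\epsilon_N}(0)$ must lie outside $U$ for every $m$), not a finite-type restriction on which symbols may follow which; and even for genuine sub-IFSs with overlaps there is no general "pressure drop" theorem forcing $\dim_H<d$. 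Second, the natural elementary technique fails precisely because of the existential quantifier: if $x\in\Lambda'$ has a coding $(\epsilon_i)$ avoiding $U$ and you place the cube $f_{\epsilon_1}\cdots f_{\epsilon_{n(r)}}(C^*)$ (with $C^*\subset U$) inside a ball or cube around $x$, the points of that sub-cube are \emph{not} excluded from $\Lambda'$: such a point has one coding passing through $U$, but may perfectly well have another coding avoiding $U$. So neither a Lebesgue-density argument nor a cube-counting argument applies to $\Lambda'$, and the strict inequality $\dim_H\Lambda'<d$ is left entirely unsupported. Your measure-theoretic step has a related quantifier mismatch: a Borel--Cantelli/covering argument with the sets $f_{\omega}(U)$ can at best show that a.e.\ $x$ lies in infinitely many $f_{\omega}(U)$, i.e.\ that \emph{some} codings of $x$ pass through $U$; it cannot show that \emph{every} coding visits $U$ infinitely often, which is what your branching construction requires (and there is no invariant measure or independence structure specified that would justify the second Borel--Cantelli lemma here).

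The fix is the reduction the paper actually uses: work with $U_{\Lambda}$, the set of points with a \emph{unique} coding. Proposition 2.1(3) (due to Sidorov) gives the dichotomy that replaces your "infinitely many visits" property: if no tail of any coding of $x$ lies in $U_{\Lambda}$, then every node of the coding tree of $x$ eventually branches, so $x$ has a continuum of codings; hence the exceptional set is contained in $\bigcup_{\omega}f_{\omega}(U_{\Lambda})$, a countable union of similitude images. Because membership in $U_{\Lambda}$ is a \emph{universal} condition (all codings coincide), exhibiting a single branching point is enough to exclude membership: every point of $f_{\epsilon_1}\cdots f_{\epsilon_{n(r)}}\bigl(f_k(\Lambda)\cap f_l(\Lambda)\bigr)$ has at least two codings, so this set lies in $U_{\Lambda}^{c}$. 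This is exactly what makes both the Lebesgue density theorem (giving $\mathcal{L}(U_{\Lambda})=0$, Theorem \ref{Uncountable thm}) and the cube-counting lemma (Lemma \ref{Nik lemma}, giving $\dim_H U_{\Lambda}<d$, Theorem \ref{Dimension thm}) work, with the cube $C^{*}\subset\mathrm{int}\bigl(f_k(\Lambda)\cap f_l(\Lambda)\bigr)$ --- whose existence your first paragraph correctly establishes from the vertex hypothesis --- playing the role of the excluded sub-cube. Your overlap-region setup and the injectivity observation about switching between $k$ and $l$ are fine; the argument collapses at the point where positivity of $\mathcal{L}(U)$ must be converted into a dimension drop for a set you cannot control.
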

 
The purpose of this paper is to generalise and strengthen Theorem \ref{Nik's theorem}. Our approach does not make any assumptions on the existence of holes in $\Lambda$ and extends to the inhomogeneous case.

Let $\Lambda$ be as above. We will be interested in the case when $\mathcal{L}(\Lambda)>0$. Here $\mathcal{L}(\cdot)$ denotes the $d$-dimensional Lebesgue measure. Clearly when $\Lambda=\Omega$ then $\mathcal{L}(\Lambda)>0$. However, there are cases when $\Lambda\neq \Omega,$ i.e., the case when our attractor has holes, yet the Lebesgue measure of $\Lambda$ is still positive. Typically, determining whether the attractor of a given IFS has positive Lebesgue measure is a difficult problem.

In \cite{JorPol} the authors consider the case when there are $n$ homogeneous contractions $f_{j}:\mathbb{R}^{2}\to \mathbb{R}^{2}$ of the form $$f_{j}(x)=\lambda x +(c_{j}^{1},c_{j}^{2}),$$  where $(c_{j}^{1},c_{j}^{2})\in\{(a,b)\in\mathbb{Z}^{2}:0\leq a,b\leq k-1\}.$ It is assumed that $n>k.$ If the points $(c_{j}^{1},c_{j}^{2})$ are fixed and $\lambda$ is allowed to vary, the geometry of the associated $\Lambda$ also varies. In particular, if $\lambda$ is sufficiently small then the open set condition is satisfied and the Hausdorff dimension is easy to compute. However, for $\lambda$ sufficiently large the open set condition is not satisfied and determining the dimension of $\Lambda$ is less straightforward. The authors show that for each family of contractions there exists an interval $I\subset (0,1)$ for which $\mathcal{L}(\Lambda)>0$ for almost every $\lambda\in I.$ Moreover, this $I$ is calculated explicitly. Their results imply the existence of a broad class of $\Lambda$ for which $\mathcal{L}(\Lambda)>0$ and $\Lambda$ contains holes.

In \cite{BrMoSid} the case where $p_{1},p_{2}$ and $p_{3}$ are the vertices of an equilateral triangle is studied. For $\lambda=\frac{1}{2}$ our $\Lambda$ is the well known Sierpi\'{n}ski gasket. It can be shown that $\Lambda=\Omega$ if and only if $\lambda \geq \frac{2}{3}$. The authors show that for all $\lambda\geq \lambda^{*}\approx 0.647 $ the associated $\Lambda$ has nonempty interior and therefore has positive Lebesgue measure. Here $\lambda^{*}$ is the appropriate root of $2x^3-2x^2+2x=1.$ It is a consequence of the aforementioned results of \cite{JorPol} that for almost every $\lambda>0.585\ldots$ the associated $\Lambda$ has positive Lebesgue measure.

Our main result is the following generalisation of Theorem \ref{Nik's theorem}.

\begin{thm}
\label{Uncountable thm}
Assume $\sum_{j=1}^{n}\lambda_{j}^{d}\neq 1$ and that $\mathcal{L}(\Lambda)>0$. Then almost every $x\in \Lambda$ has a continuum of codings. 
\end{thm}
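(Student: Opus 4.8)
The plan is to reduce the whole statement to the single assertion that the set of points with a \emph{unique} coding is Lebesgue null, and then to prove that assertion by a self-similar Lebesgue density argument.

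First I would use the hypotheses to produce overlaps of positive measure. Since $\Lambda=\bigcup_{j=1}^{n}f_{j}(\Lambda)$ and each $f_{j}$ is a homothety of ratio $\lambda_{j}$, subadditivity gives $\mathcal{L}(\Lambda)\le\sum_{j=1}^{n}\mathcal{L}(f_{j}(\Lambda))=\big(\sum_{j=1}^{n}\lambda_{j}^{d}\big)\mathcal{L}(\Lambda)$; as $\mathcal{L}(\Lambda)>0$ this forces $\sum_{j=1}^{n}\lambda_{j}^{d}\ge 1$, and the hypothesis $\sum_{j=1}^{n}\lambda_{j}^{d}\ne 1$ upgrades this to $\sum_{j=1}^{n}\lambda_{j}^{d}>1$. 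Writing $B=\bigcup_{j<k}\big(f_{j}(\Lambda)\cap f_{k}(\Lambda)\big)$ for the set of points lying in at least two first-level cylinders, I would note that $\mathcal{L}(B)=0$ would make the union essentially disjoint and yield $\mathcal{L}(\Lambda)=\big(\sum_{j}\lambda_{j}^{d}\big)\mathcal{L}(\Lambda)>\mathcal{L}(\Lambda)$, a contradiction; hence $\mathcal{L}(B)>0$.

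Next I would pass to the coding space $\Sigma=\{1,\dots,n\}^{\mathbb{N}}$ and the projection $\pi$ sending a sequence to the point it codes, so that the codings of $x$ form the compact fibre $\pi^{-1}(x)\subseteq\Sigma$. By the Cantor--Bendixson dichotomy each such fibre is either at most countable or contains a Cantor set and hence has the cardinality of the continuum; thus it suffices to show that $N:=\{x:\pi^{-1}(x)\text{ is at most countable}\}$ is Lebesgue null. I would then reduce $N$ to the unique-coding set $W:=\{x:\pi^{-1}(x)\text{ is a singleton}\}$: a nonempty countable compact fibre has an isolated point, which means $x$ has a coding that is forced after some finite prefix $w$, equivalently $f_{w}^{-1}(x)\in W$. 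Therefore $N\subseteq\bigcup_{w}f_{w}(W)$, a countable union, and it is enough to prove $\mathcal{L}(W)=0$.

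The heart of the matter is $\mathcal{L}(W)=0$, which I would establish by contradiction. Suppose $\mathcal{L}(W)>0$ and let $p\in W$ be a Lebesgue density point. Along the unique coding of $p$, let $C_{m}$ denote the level-$m$ cylinder containing $p$; because every $f_{w}$ is a homothety, $C_{m}$ is a scaled copy of $\Lambda$ of ratio $r_{m}=\big(\mathcal{L}(C_{m})/\mathcal{L}(\Lambda)\big)^{1/d}\to 0$ with the \emph{same} eccentricity as $\Lambda$, so the $C_{m}$ shrink regularly to $p$ and the density theorem gives $\mathcal{L}(W\cap C_{m})/\mathcal{L}(C_{m})\to 1$. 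On the other hand, any $x\in W\cap C_{m}$ has its unique coding beginning with the length-$m$ prefix $w$ of $p$, whence $f_{w}^{-1}(x)\in W$ and so $W\cap C_{m}\subseteq f_{w}(W)$; since $f_{w}$ scales $d$-dimensional measure by $\mathcal{L}(C_{m})/\mathcal{L}(\Lambda)$, this yields $\mathcal{L}(W\cap C_{m})/\mathcal{L}(C_{m})\le\mathcal{L}(W)/\mathcal{L}(\Lambda)$ for every $m$. Letting $m\to\infty$ forces $\mathcal{L}(W)=\mathcal{L}(\Lambda)$, which contradicts $W\cap B=\emptyset$ together with $\mathcal{L}(B)>0$. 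Hence $\mathcal{L}(W)=0$, so $\mathcal{L}(N)=0$, and almost every $x\in\Lambda$ has a continuum of codings.

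I expect the density step to be the main obstacle: one must know that the cylinders $C_{m}$ contract to $p$ with bounded eccentricity so that the Lebesgue density theorem genuinely applies, and this is exactly where the homothety structure (equal scaling in all coordinates, no rotation) is indispensable---for general affine contractions the cylinders could degenerate in shape and the argument would fail. The inclusion $N\subseteq\bigcup_{w}f_{w}(W)$ and the measurability of $W$ require only routine but careful bookkeeping.
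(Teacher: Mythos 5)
Your proof is correct, and its skeleton matches the paper's: reduce the theorem to showing that the unique-coding set (your $W$, the paper's $U_{\Lambda}$) is Lebesgue null, extract from the hypotheses a first-level overlap of positive measure, and finish with a Lebesgue density argument that exploits the fact that cylinders are scaled copies of $\Lambda$. The differences are in execution, and each is worth noting. (a) You obtain the overlap by essential disjointness (if all pairwise intersections were null then $\mathcal{L}(\Lambda)=(\sum_{j}\lambda_{j}^{d})\mathcal{L}(\Lambda)>\mathcal{L}(\Lambda)$), which is tidier than the inclusion--exclusion computation of Proposition \ref{intersect prop}; the paper's version, on the other hand, also yields the converse direction used for Corollary \ref{unique cor}. (b) You prove the reduction $N\subseteq\bigcup_{w}f_{w}(W)$ via Cantor--Bendixson and the isolated-point/forced-prefix observation, whereas the paper quotes this inclusion from \cite{Sidorov4} in Proposition \ref{inclusion prop}; your argument is in effect a proof of the quoted step, and it also justifies replacing ``fewer than continuum'' by ``at most countable''. (c) The density step is organized differently: you argue by contradiction at a single density point $p\in W$, use the self-similar inclusion $W\cap C_{m}\subseteq f_{w}(W)$ to bound the relative density of $W$ in cylinders by $\mathcal{L}(W)/\mathcal{L}(\Lambda)$, and therefore need the density theorem for regularly (nicely) shrinking families, since your shrinking sets are cylinders rather than balls; the contradiction then comes from $\mathcal{L}(W)=\mathcal{L}(\Lambda)$ against $W\cap B=\emptyset$ and $\mathcal{L}(B)>0$. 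The paper instead plants the scaled overlap $f_{\epsilon_{1}}\cdots f_{\epsilon_{n(r)}}(f_{k}(\Lambda)\cap f_{l}(\Lambda))$, which is disjoint from $U_{\Lambda}$, inside $B_{r}(x)$, and bounds the upper density of $U_{\Lambda}$ at \emph{every} $x\in U_{\Lambda}$ by a uniform constant strictly less than $1$; this needs only the standard ball form of the density theorem and no selection of a density point. The two mechanisms carry the same information --- your inclusion says each cylinder misses $W$ in proportion at least $1-\mathcal{L}(W)/\mathcal{L}(\Lambda)$, while the paper's planted copy gives the weaker but sufficient proportion $\mathcal{L}(f_{k}(\Lambda)\cap f_{l}(\Lambda))/\mathcal{L}(\Lambda)$ --- so what yours buys is a cleaner logical structure with no tracking of the constant $C(d)\,Diam(\Omega)^{d}$, and what the paper's buys is avoidance of the generalized density theorem. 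Two small remarks: the measurability of $W$ that you defer is indeed routine, since the set of points with at least two codings is the $\sigma$-compact set $\bigcup_{w}\bigcup_{k<l}f_{w}(f_{k}(\Lambda)\cap f_{l}(\Lambda))$; and in your closing comment it is the similitude structure together with $\mathcal{L}(\Lambda)>0$ (bounded eccentricity of cylinders) that is essential, not the absence of rotation --- rotations would do no harm, while genuinely anisotropic affine maps would, as you say, break the regular-shrinking hypothesis.
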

When $f_{k}(\Lambda)\cap f_{l}(\Lambda)$ has nonempty interior for some $1\leq k<l\leq n$ we can make a stronger statement.

\begin{thm}
\label{Dimension thm}
Assume $f_{k}(\Lambda)\cap f_{l}(\Lambda)$ has nonempty interior for some $1\leq k<l\leq n.$ Then the set of points that do not have a continuum of codings has Hausdorff dimension strictly less that $d.$
\end{thm}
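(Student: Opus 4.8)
The plan is to prove Theorem~\ref{Dimension thm} by quantifying the branching structure of codings. Fix $1\leq k<l\leq n$ with $U:=\mathrm{int}(f_{k}(\Lambda)\cap f_{l}(\Lambda))\neq\emptyset$. The key observation is that if a point $x$ ever lands in $U$ under the symbolic dynamics, it enjoys a genuine choice: it can be coded through either $f_k$ or $f_l$ and in both cases the continuation still lives in $\Lambda$. So I would call $x$ \emph{branching} if there is a finite admissible word $f_{\epsilon_1}\cdots f_{\epsilon_N}$ with $x\in f_{\epsilon_1}\cdots f_{\epsilon_N}(U)$; at such a moment one gets two distinct codings that agree up to position $N$ and differ thereafter. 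If $x$ is branching \emph{infinitely often}, one can iterate this choice along a sequence of disjoint (in the tree sense) splitting times and build a continuum of codings by a standard Cantor-set-of-choices argument. Thus the real content is to show that the set $E$ of points that branch only finitely often (equivalently, do not have a continuum of codings) has Hausdorff dimension strictly less than $d$.

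First I would set up the natural covering/self-similarity apparatus. For each finite word $\mathbf{i}=(i_1,\ldots,i_N)$ the cylinder $f_{\mathbf{i}}(\Lambda)$ is a scaled copy of $\Lambda$ with contraction ratio $\prod_{j}\lambda_{i_j}$, and $\Lambda=\bigcup_{\mathbf{i}}f_{\mathbf{i}}(\Lambda)$ for words of any fixed length. The set $G$ of points that branch at least once is open-ish: it contains $\bigcup_{\mathbf{i}}f_{\mathbf{i}}(U)$, which is a union of sets each of positive Lebesgue measure, and by self-similarity a definite proportion of every cylinder meets $G$. Concretely, because $U$ has nonempty interior, there is $\delta>0$ and a word length $m$ such that every cylinder $f_{\mathbf{i}}(\Lambda)$ of length $m$ has the property that the relative measure (or the proportion of length-$m$ subcylinders) lying in $G$ is at least $\delta$. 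The points in $E$ are exactly those whose forward orbit in the cylinder tree eventually avoids every further branching opportunity, so at each successive block of length $m$ they must fall into the complement of $G$, an event that costs a factor of $(1-\delta)$ in measure (and, crucially, in covering count) at every stage.

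The mechanism to convert this into a dimension bound is a Falconer-type pressure/subadditive argument. I would cover $E$ by cylinders of the appropriate generation in which no branching has occurred and estimate the $s$-dimensional Hausdorff content. Because at each block of length $m$ one discards at least a $\delta$-fraction of the continuations, the number of surviving length-$Nm$ cylinders grows by a factor strictly smaller than the full branching number, while their diameters contract geometrically; feeding this into $\sum (\mathrm{diam})^{s}$ shows that the sum stays bounded for some $s<d$. The cleanest formulation is to define a sub-multiplicative weighting on words that records "no branch yet," observe that its growth exponent is strictly below the similarity exponent $d$ (here I use that $\sum_j \lambda_j^d$ governs the full count and the surviving count is a strict $(1-\delta)$-fraction of it), and conclude $\dim_H E \leq s < d$. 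The main obstacle I anticipate is making the "definite proportion branches at each stage" claim uniform and honestly multiplicative in the inhomogeneous case: the contraction ratios vary between cylinders, so the geometric estimate must be carried out against the correct generation (e.g.\ grouping words by comparable diameter rather than by fixed length) and one must verify that the branching density $\delta$ is bounded below uniformly over all cylinders via the self-similar structure of $U$. Once that uniform lower bound is in hand, the Hausdorff content estimate and the strict inequality $s<d$ follow routinely.
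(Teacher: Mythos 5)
Your reduction steps are sound and essentially parallel the paper's: passing from ``no continuum of codings'' to the set $E_{0}$ of points that never branch (i.e.\ never lie in any $f_{\epsilon_{1}}\cdots f_{\epsilon_{N}}(U)$) costs nothing by countable stability of Hausdorff dimension, exactly as the paper reduces to $U_{\Lambda}$ via Corollary \ref{Simplifying corollary}; and your observation that every cylinder contains a definite proportion, in measure, of branching points is correct (fix a word $\mathbf{w}$ with $f_{\mathbf{w}}(\Lambda)\subseteq U$; then $f_{\mathbf{i}\mathbf{w}}(\Lambda)\subseteq f_{\mathbf{i}}(U)$ for every $\mathbf{i}$). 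The genuine gap is the parenthetical you flagged and then treated as automatic: ``a factor of $(1-\delta)$ in measure \emph{(and, crucially, in covering count)}.'' For an overlapping IFS measure density does not convert into a covering-count discount, and the theorem lives precisely in the overlapping regime: nonempty interior of $f_{k}(\Lambda)\cap f_{l}(\Lambda)$ together with Proposition \ref{intersect prop} forces $\sum_{j}\lambda_{j}^{d}>1$. Quantify your scheme in the homogeneous case: pruning a $\delta$-fraction of continuations per block of length $m$ still leaves of order $\bigl((1-\delta)n^{m}\bigr)^{N}$ words of length $Nm$, with diameters comparable to $\lambda^{Nm}$, so the sum $\sum(\mathrm{diam})^{s}$ is controlled only for $s\geq \frac{\log n}{\log(1/\lambda)}-\frac{\log\frac{1}{1-\delta}}{m\log(1/\lambda)}$. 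The first term is the similarity dimension, which exceeds $d$ by the \emph{fixed} positive amount $\frac{\log(n\lambda^{d})}{\log(1/\lambda)}$, while the correction term requires $\delta>1-(n\lambda^{d})^{-m}$ to bring the exponent below $d$ --- i.e.\ $\delta$ close to $1$ --- whereas your $\delta$ is small (of order $n^{-m_{0}}$, where $m_{0}$ is the length of $\mathbf{w}$, hence tiny exactly when the overlap is small). So the exponent your argument produces is in general larger than $d$ and the bound is vacuous. The root cause is structural: $\sum_{j}\lambda_{j}^{d}>1$ means the cylinders overlap so heavily that the number of words wildly overcounts the number of sets needed to cover; no constant-factor-per-generation pruning of a word count can get below $d$.

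This is why the paper never counts cylinders: it works with cubes in $\mathbb{R}^{d}$, where cubes of a given scale can be taken disjoint and counting is honest. Having reduced to $U_{\Lambda}$, it fixes a cube $C^{*}\subseteq f_{k}(\Lambda)\cap f_{l}(\Lambda)$ and proves a porosity statement: for any cube $C(z,r)$ meeting $U_{\Lambda}$, either $C(z,\frac{r}{2})$ misses $U_{\Lambda}$, or one picks $x\in U_{\Lambda}\cap C(z,\frac{r}{2})$, runs along its coding to the depth $n(r)$ at which $f_{\epsilon_{1}}\cdots f_{\epsilon_{n(r)}}(\Lambda)\subseteq C(z,r)$, and then $f_{\epsilon_{1}}\cdots f_{\epsilon_{n(r)}}(C^{*})$ is a subcube of $C(z,r)$ of volume at least $\delta\,\mathcal{L}(C(z,r))$ consisting entirely of points with at least two codings. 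Lemma \ref{Nik lemma} (a box-counting lemma for sets with this uniform-hole property) then yields $\dim_{H}U_{\Lambda}<d$. Note that your measure-density observation is exactly the input this lemma needs --- the same cube $f_{\epsilon_{1}}\cdots f_{\epsilon_{n(r)}}(C^{*})$ is disjoint from your $E_{0}$, since its points branch --- so the repair is local: keep your reduction to $E_{0}$, discard the word-counting/pressure mechanism, and replace it with the cube-porosity argument and Lemma \ref{Nik lemma}.
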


The expression $\sum_{j=1}^{n}\lambda_{j}^{d}$ occuring in the statement of Theorem \ref{Uncountable thm} appears naturally in the study of IFS's. If $\sum_{j=1}^{n}\lambda_{j}^{d}< 1$ then it is a simple exercise to show $\mathcal{L}(\Lambda)=0.$ Therefore it is only possible for $\mathcal{L}(\Lambda)>0$ when $\sum_{j=1}^{n}\lambda_{j}^{d}\geq 1.$ The condition $\sum_{j=1}^{n}\lambda_{j}^{d}\neq 1 $ stated in Theorem \ref{Uncountable thm} is not a technical condition and is in fact essential. It will be shown in Corollary \ref{unique cor} that if $\sum_{j=1}^{n}\lambda_{j}^{d}=1$ then almost every $x\in\Lambda$ has a unique coding. It is natural to ask whether there exists self-similar sets with positive Lebesgue measure when $\sum_{j=1}^{n}\lambda_{j}^{d}=1$. However, it is straightforward to construct examples when this equation is satisfied. For example, consider the case where $p_{1}=0,$ $p_{2}=1$ and $\lambda_{1}=\lambda_{2}.$ In this case $\sum_{j=1}^{n}\lambda_{j}^{d}=1$ when $\lambda=\frac{1}{2}$. The associated $\Lambda$ is the interval $[0,1],$ which clearly has positive Lebesgue measure.

In this paper, as well as studying the cardinality of the set of codings of a generic $x\in \Lambda$, we also study the complexity of these codings. In the context of $\lambda$-expansions we say that a $\lambda$-expansion of $x,$ the sequence $(\epsilon_{i})_{i=1}^{\infty}\in\{0,1\}^{\mathbb{N}},$ is a \textit{universal expansion for $x$} if given any finite block $\delta_{1}\cdots\delta_{N}$ consisting of $0$'s and $1$'s, there exists $k\in\mathbb{N}$ such that $\epsilon_{k+i}=\delta_{i}$ for $1\leq i\leq N.$ Universal expansions were originally introduced in \cite{ErdosKomornik}, where they were shown to be intimately related to the so called spectra of a real number. We discuss this relation in more detail in Section $5.$ In \cite{Sid2} it was shown that for $\lambda \in (\frac{1}{2},1)$ almost every $x\in I_{\lambda}$ has a universal expansion. Proceeding by analogy with the case of $\lambda$-expansions, given $x\in\Lambda$ and $(\epsilon_{i})_{i=1}^{\infty}\in\{1,\ldots,n\}^{\mathbb{N}}$ a coding for $x.$ We say that $(\epsilon_{i})_{i=1}^{\infty}$ is a \textit{universal coding} if for any finite block $\delta_{1}\cdots\delta_{N}$ consisting of elements from $\{1,\ldots,n\},$ there exists $k\in \mathbb{N}$ such that $\epsilon_{k+i}=\delta_{i}$ for $1\leq i\leq N.$ Our result regarding universal codings is the following.

\begin{thm}
\label{Universal thm}
Suppose $\mathcal{L}(\Lambda)>0,$ then almost every $x\in\Lambda$ has a universal coding.
\end{thm}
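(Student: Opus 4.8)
The plan is to deduce the existence of a single universal coding from a countable collection of far weaker statements — that each fixed block occurs in \emph{some} coding — and then to recombine these statements inside each fibre by a Baire category argument. For a finite block $\mathbf{a}=a_1\cdots a_k$ over $\{1,\dots,n\}$ write $V_{\mathbf a}$ for the set of $x\in\Lambda$ admitting at least one coding in which $\mathbf a$ appears (a continuous image of the open set of sequences containing $\mathbf a$, hence measurable). The heart of the argument is a boosting lemma asserting that $\mathcal{L}(\Lambda\setminus V_{\mathbf a})=0$ for every $\mathbf a$. Two features of $V_{\mathbf a}$ drive this. First, it is forward invariant: if $y$ has a coding containing $\mathbf a$ then prepending any digit $j$ produces a coding of $f_j(y)$ still containing $\mathbf a$, so $f_j(V_{\mathbf a})\subseteq V_{\mathbf a}$, and hence $f_\omega(V_{\mathbf a})\subseteq V_{\mathbf a}$ for every finite word $\omega$. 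Second, $V_{\mathbf a}\supseteq f_{\mathbf a}(\Lambda)$, because reading $\mathbf a$ first and then any coding of the remaining point is a coding containing $\mathbf a$; as $\mathcal{L}(\Lambda)>0$ this gives $\mathcal L(V_{\mathbf a})\geq \lambda_{a_1}^{d}\cdots\lambda_{a_k}^{d}\,\mathcal L(\Lambda)>0$.

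To boost this positive measure to full measure I would use the self-similar geometry together with the Lebesgue density theorem. Setting $c=\mathcal L(V_{\mathbf a})/\mathcal L(\Lambda)>0$, invariance gives $f_\omega(V_{\mathbf a})\subseteq V_{\mathbf a}\cap f_\omega(\Lambda)$, so because $f_\omega$ is a homothety of ratio $\lambda_\omega=\prod_i\lambda_{\omega_i}$ the relative density of $V_{\mathbf a}$ in every cylinder $f_\omega(\Lambda)$ is at least $c$. Now suppose $\mathcal L(\Lambda\setminus V_{\mathbf a})>0$ and let $z$ be a Lebesgue density point of $\Lambda\setminus V_{\mathbf a}$. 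Fixing any coding of $z$ yields cylinders $C_m=f_{\epsilon_1\cdots\epsilon_m}(\Lambda)\ni z$ with $\operatorname{diam}C_m\to 0$; since these are homothetic copies of the fixed set $\Lambda$ one has $C_m\subseteq \overline B(z,\operatorname{diam}C_m)$ and $\mathcal L(C_m)\geq\theta\,\mathcal L(\overline B(z,\operatorname{diam}C_m))$ for a fixed $\theta>0$, i.e.\ the $C_m$ shrink regularly to $z$. The density theorem then forces $\mathcal L((\Lambda\setminus V_{\mathbf a})\cap C_m)/\mathcal L(C_m)\to 1$, contradicting the uniform lower bound $c$ on the density of $V_{\mathbf a}$ in every $C_m$. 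Hence each $V_{\mathbf a}$ is co-null.

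With the boosting lemma in hand, I would recombine. Let $G=\bigcap_{\mathbf a}V_{\mathbf a}$, a countable intersection of co-null sets, so $B:=\Lambda\setminus G$ is null. The crucial measure-theoretic observation is that genericity propagates to \emph{every} reachable point: for a.e.\ $x$ and every finite word $\omega$ with $x\in f_\omega(\Lambda)$, the point $f_\omega^{-1}(x)$ lies in $G$. Indeed the set of $x$ failing this is $\bigcup_\omega f_\omega(B)$, a countable union of sets of measure $\lambda_\omega^{d}\mathcal L(B)=0$, hence null. Fix such an $x$ and consider its fibre $T_x\subseteq\{1,\dots,n\}^{\mathbb N}$ of all codings, a nonempty compact (so Baire) space. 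For each $\mathbf a$ the set $O_{\mathbf a}$ of sequences containing $\mathbf a$ is open, and $O_{\mathbf a}\cap T_x$ is dense in $T_x$: given any coding of $x$ and any $m$, the point $y=f_{\epsilon_1\cdots\epsilon_m}^{-1}(x)$ lies in $G\subseteq V_{\mathbf a}$, so $y$ has a coding containing $\mathbf a$, and splicing it on after $\epsilon_1\cdots\epsilon_m$ gives a coding of $x$ in $O_{\mathbf a}$ agreeing with the original to depth $m$. By Baire category $\bigcap_{\mathbf a}(O_{\mathbf a}\cap T_x)\ne\varnothing$, and any element of this intersection is a universal coding of $x$.

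The main obstacle, and where the hypothesis $\mathcal L(\Lambda)>0$ is genuinely used, is the boosting lemma: the delicate point is that the natural differentiation basis here consists of the cylinders $f_\omega(\Lambda)$, which are homothetic copies of a possibly very irregular positive-measure set rather than balls, so I must invoke the density theorem in its form for regularly shrinking families and verify the regularity constant $\theta$ is uniform — which it is, precisely because all cylinders are rescalings of the single set $\Lambda$. By contrast the recombination step is soft: once each $V_{\mathbf a}$ is co-null, the quantifier interchange from "each block appears in some coding" to "some coding contains every block" is handled entirely by the null computation for $\bigcup_\omega f_\omega(B)$ and the Baire category theorem on the compact fibres $T_x$, with no further appeal to the structure of the overlaps and no need for the condition $\sum_{j=1}^n\lambda_j^{d}\neq 1$.
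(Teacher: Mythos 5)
Your proof is correct, and its measure-theoretic core coincides with the paper's: your $V_{\mathbf a}$ is exactly the complement in $\Lambda$ of the paper's $U_{B_{k}}$ (the set of points no coding of which contains the block $B_{k}$), and your boosting lemma is precisely the paper's claim that $\mathcal{L}(U_{B_{k}})=0$, which the paper proves by the same Lebesgue density estimate --- there, one inserts the sub-cylinder $f_{\epsilon_{1}}\cdots f_{\epsilon_{n(r)}}f_{\epsilon_{1}'}\cdots f_{\epsilon_{N_{k}}'}(\Lambda)$ (with $B_{k}=\epsilon_{1}'\cdots\epsilon_{N_{k}}'$), which lies in the complement of $U_{B_{k}}$, into the ball $B_{r}(x)$, while you run the differentiation along the cylinders $f_{\omega}(\Lambda)$ themselves as a nicely shrinking family; these are the same estimate with the same uniform constant, and both use $\mathcal{L}(\Lambda)>0$ in the same place. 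Where you genuinely diverge is the recombination step. The paper proves a deterministic, pointwise inclusion (Proposition \ref{inclusion prop}, inclusion (\ref{equation 4})): \emph{every} $x$ with no universal coding lies in $\bigcup_{k}\bigcup_{\omega}f_{\omega}(U_{B_{k}})$, established by an explicit algorithm that repeatedly splices blocks into codings and either constructs a universal coding or terminates inside some $f_{\omega}(U_{B_{k}})$; the measure statement then follows from Corollary \ref{Simplifying corollary}. You instead prove an inclusion only for almost every $x$: after discarding the null set $\bigcup_{\omega}f_{\omega}(B)$, every tail point $f_{\omega}^{-1}(x)$ lies in $G=\bigcap_{\mathbf a}V_{\mathbf a}$, and the Baire category theorem on the compact fibre $\Sigma_{\Lambda}(x)$ (with density of the relatively open sets $O_{\mathbf a}\cap\Sigma_{\Lambda}(x)$ supplied by splicing at depth $m$) produces the universal coding. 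The two recombinations encode the same splicing principle --- Baire's theorem, unwound, is the paper's iterative construction --- but yours is softer and avoids the bookkeeping with the indices $j_{1},j_{2},\ldots$ in the paper's algorithm, at the price of yielding only an almost-everywhere statement rather than the paper's pointwise one; as a small bonus, your argument shows that for almost every $x$ the universal codings form a dense $G_{\delta}$ subset of $\Sigma_{\Lambda}(x)$, which is slightly more than bare existence.
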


The proofs of Theorems \ref{Uncountable thm} and \ref{Universal thm} will take on a similar structure. As such we will only prove Theorem \ref{Uncountable thm} in full and outline the necessary modifications required to prove Theorem \ref{Universal thm}.

The rest of this paper is structured as follows. In Section 2 we state some necessary preliminaries before giving our proofs of Theorems \ref{Uncountable thm}, \ref{Dimension thm} and \ref{Universal thm} in Section 3. In Section 4 we discuss some applications of our results to $\lambda$-expansions with arbitrary digit sets. Finally in Section 5 we pose some open questions.

\section{Preliminaries}
Before proving Theorems \ref{Uncountable thm}, \ref{Dimension thm} and \ref{Universal thm} we require the following technical arguments. For ease of exposition we denote the set of codings for a given $x\in \Lambda$ by $\Sigma_{\Lambda}(x),$ i.e., $$\Sigma_{\Lambda}(x):=\Big\{(\epsilon_{i})_{i=1}^{\infty}\in\{1,\ldots, n\}^{\mathbb{N}}: \lim_{N\to\infty} f_{\epsilon_{1}}\cdots f_{\epsilon_{N}} (0)=x\Big\}.$$Moreover, let $$U_{\Lambda}:=\Big\{x\in \Lambda: \textrm{card } \Sigma_{\Lambda}(x)=1\Big\}.$$ That is $U_{\Lambda}$ is the set of points with a unique coding.  Understanding the size/dimension of this set will be important in our proofs of Theorems \ref{Uncountable thm} and \ref{Dimension thm}.

Let $\{B_{k}\}_{k=1}^{\infty}$ be an enumeration of the set of all finite blocks consisting of elements from the set $\{1,\ldots,n\}.$ Moreover let $N_{k}$ denote the length of the block $B_{k}.$ To each $B_{k}$ we associate the set $U_{B_{k}}$ defined as follows: $$U_{B_{k}}:=\Big\{x\in \Lambda: \textrm{no coding of } x \textrm{ contains the block } B_{k} \Big\}.$$The following proposition highlights the importance of the set $U_{\Lambda}$ and the $U_{B_{k}}$'s.

\begin{prop}
\label{inclusion prop}
The following inclusions hold:
\begin{equation}
\label{equation 1}
U_{\Lambda}\subseteq\Big\{x\in \Lambda: \textrm{card }\Sigma_{\Lambda}(x)<2^{\aleph_{0}}\Big\},
\end{equation}
\begin{equation}
\label{equation 2}
U_{B_{k}}\subseteq \Big\{x\in \Lambda: x \textrm{ has no universal coding }\Big\} ,
\end{equation}
\begin{equation}
\label{equation 3}
 \Big\{x\in \Lambda: \textrm{card }\Sigma_{\Lambda}(x)<2^{\aleph_{0}}\Big\}\subseteq \bigcup_{N=0}^{\infty}\bigcup_{(\epsilon_{i})\in\{1,\ldots, n\}^{N}} f_{\epsilon_{1}}\cdots f_{\epsilon_{N}}(U_{\Lambda}),
\end{equation}
\begin{equation}
\label{equation 4}
\Big\{x\in \Lambda: x \textrm{ has no universal coding }\Big\}\subseteq \bigcup_{k=0}^{\infty}\bigcup_{N=0}^{\infty}\bigcup_{(\epsilon_{i})\in\{1,\ldots, n\}^{N}} f_{\epsilon_{1}}\cdots f_{\epsilon_{N}}(U_{B_{k}}).
\end{equation}
 \end{prop}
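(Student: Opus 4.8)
The plan is to dispatch the two easy inclusions \eqref{equation 1} and \eqref{equation 2} directly from the definitions, and to prove the two substantial inclusions \eqref{equation 3} and \eqref{equation 4} by contraposition, in each case building an explicit coding (or tree of codings) of $x$. Inclusion \eqref{equation 1} is immediate, since $x\in U_{\Lambda}$ gives $\operatorname{card}\Sigma_{\Lambda}(x)=1<2^{\aleph_{0}}$. Inclusion \eqref{equation 2} is equally immediate: a universal coding must contain \emph{every} finite block, so if $x\in U_{B_{k}}$, meaning no coding of $x$ contains $B_{k}$, then no coding of $x$ can be universal.

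The engine for the remaining inclusions is the following bookkeeping observation. Writing $\Sigma_{\Lambda}(x)=\{(\epsilon_{i}):x\in f_{\epsilon_{1}}\cdots f_{\epsilon_{N}}(\Lambda)\text{ for all }N\}$ (valid because the nested compact sets $f_{\epsilon_{1}}\cdots f_{\epsilon_{N}}(\Lambda)$ have diameters tending to $0$), a finite word $w=\epsilon_{1}\cdots\epsilon_{N}$ is the prefix of some coding of $x$ exactly when $x\in f_{\epsilon_{1}}\cdots f_{\epsilon_{N}}(\Lambda)$, and then $y_{w}:=f_{\epsilon_{N}}^{-1}\cdots f_{\epsilon_{1}}^{-1}(x)\in\Lambda$ satisfies $x=f_{\epsilon_{1}}\cdots f_{\epsilon_{N}}(y_{w})$. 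The map $(\delta_{i})\mapsto \epsilon_{1}\cdots\epsilon_{N}\delta_{1}\delta_{2}\cdots$ is then a bijection between $\Sigma_{\Lambda}(y_{w})$ and the set of codings of $x$ beginning with $w$. Consequently $x\in f_{\epsilon_{1}}\cdots f_{\epsilon_{N}}(U_{\Lambda})$ precisely when some valid prefix $w$ forces the coding of $x$ (that is, $y_{w}\in U_{\Lambda}$), and $x\in f_{\epsilon_{1}}\cdots f_{\epsilon_{N}}(U_{B_{k}})$ precisely when the descendant $y_{w}$ has no coding containing $B_{k}$.

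For \eqref{equation 3} I argue the contrapositive. Suppose $x$ lies in none of the sets on the right; then for every valid prefix $w$ the descendant $y_{w}\notin U_{\Lambda}$, so $\operatorname{card}\Sigma_{\Lambda}(y_{w})\ge 2$. Two distinct codings of $y_{w}$ agree on some initial segment and then differ, so $y_{w}$ has a finite valid prefix admitting two distinct one-digit continuations; translating through the bijection above, every valid prefix $w$ of $x$ can be prolonged to a valid prefix after which $x$ has two distinct one-digit continuations. Exploiting this alternative at each stage I construct an injection of $\{1,2\}^{\mathbb{N}}$ into $\Sigma_{\Lambda}(x)$, whence $\operatorname{card}\Sigma_{\Lambda}(x)=2^{\aleph_{0}}$ and $x$ is not in the left-hand set. (Alternatively, $\Sigma_{\Lambda}(x)$ is a closed, hence compact, subset of $\{1,\ldots,n\}^{\mathbb{N}}$, so by the Cantor--Bendixson theorem cardinality below $2^{\aleph_{0}}$ forces a countable, non-perfect set, which therefore has an isolated coding; an isolated coding is exactly a prefix $w$ with $y_{w}\in U_{\Lambda}$.)

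For \eqref{equation 4} I again argue the contrapositive and build one universal coding greedily. Suppose $x$ lies in none of the sets on the right; taking the empty prefix gives $x\notin U_{B_{k}}$ for all $k$, and more generally, for every valid prefix $w$ and every $k$ the descendant $y_{w}$ has a coding containing $B_{k}$. Enumerating the blocks $B_{1},B_{2},\ldots$, I construct the coding in stages: given a valid prefix $w$ of $x$, the hypothesis applied to $y_{w}$ and $B_{k}$ yields a coding of $y_{w}$ containing $B_{k}$ within some finite initial segment $u$, so $wu$ is a valid prefix of $x$ that contains $B_{k}$. Concatenating the segments produced for $B_{1},B_{2},\ldots$ gives an infinite sequence all of whose initial words are valid prefixes of $x$, hence a coding of $x$, and which contains every block; this coding is universal, so $x$ is not in the left-hand set. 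The routine inclusions cost nothing, and the point demanding care is the correspondence of the second paragraph together with verifying that the inductively built prefixes stay genuine prefixes of codings of $x$ and that the constructions terminate correctly: in \eqref{equation 3} that a genuine branching always reappears below each descendant (which is exactly what $y_{w}\notin U_{\Lambda}$ supplies), and in \eqref{equation 4} that processing the blocks one at a time drives the prefix length to infinity, so that the limiting sequence is a bona fide coding.
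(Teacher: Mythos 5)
Your proposal is correct, and it differs from the paper's proof in two respects worth recording. For (\ref{equation 1}) and (\ref{equation 2}) both you and the paper treat the inclusions as immediate. For (\ref{equation 3}), however, the paper gives no argument at all: it cites \cite{Sidorov4} and merely remarks that the proof there carries over to inhomogeneous contractions, whereas you supply a self-contained proof --- the binary-tree injection of $\{1,2\}^{\mathbb{N}}$ into $\Sigma_{\Lambda}(x)$ --- together with a genuinely slicker alternative via Cantor--Bendixson, in which an isolated point of the closed set $\Sigma_{\Lambda}(x)\subset\{1,\ldots,n\}^{\mathbb{N}}$ corresponds exactly to a valid prefix $w$ with $y_{w}\in U_{\Lambda}$. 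For (\ref{equation 4}) the underlying construction is the same as the paper's (extend valid prefixes so as to swallow the blocks $B_{k}$ one at a time, and observe that getting stuck at a prefix $w$ on a block $B_{k}$ means precisely $x\in f_{\epsilon_{1}}\cdots f_{\epsilon_{N}}(U_{B_{k}})$), but the logical packaging differs: the paper argues directly from ``$x$ has no universal coding'', so its algorithm must either terminate (giving membership in the right-hand union) or run forever (producing a universal coding, a contradiction), and this direct formulation forces the bookkeeping with the indices $j_{1},j_{2},\ldots$ recording which blocks have already appeared in the current prefix. Your contrapositive assumes the extension hypothesis for \emph{every} valid prefix and \emph{every} block simultaneously, so you can process $B_{1},B_{2},\ldots$ in order with no bookkeeping and no termination dichotomy; you also make explicit the prefix/tail-point correspondence ($y_{w}=f_{\epsilon_{N}}^{-1}\cdots f_{\epsilon_{1}}^{-1}(x)$ and the induced bijection between $\Sigma_{\Lambda}(y_{w})$ and the codings of $x$ beginning with $w$) that the paper uses only tacitly. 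Both routes are sound: yours is self-contained and logically tidier; the paper's is shorter on the page because it delegates (\ref{equation 3}) to a reference and leaves the correspondence implicit.
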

\begin{proof}
Statements (\ref{equation 1}) and (\ref{equation 2}) are obvious. The proof of (\ref{equation 3}) in the context of homogeneous contractions can be found in \cite{Sidorov4}, however their proof does not make use of the homogeneity of the contractions and easily translates over to the inhomogeneous case. As such we only show that (\ref{equation 4}) holds. Suppose $x\in \Lambda$ does not have a universal coding and let $\{B_{k}\}_{k=1}^{\infty}$ be as above. To begin with we ask whether $x$ has a coding containing the block $B_{1}.$ If it doesn't then $x\in U_{B_{1}}.$  Suppose otherwise and let $(\epsilon_{i})_{i=1}^{\infty}\in \Sigma_{\Lambda}(x)$ contain $B_{1}$. Let $M_{1}\in \mathbb{N}$ be such that $\epsilon_{M_{1}+1}\cdots \epsilon_{M_{1}+N_{1}}=B_{1}.$ Moreover, let $j_{1}\in\mathbb{N}$ be the unique natural number for which $B_{k}$ appears in $\epsilon_{1}\cdots \epsilon_{M_{1}+N_{1}}$ for every $1\leq k\leq j_{1},$ but $B_{j_{1}+1}$ does not appear in  $\epsilon_{1}\cdots \epsilon_{M_{1}+N_{1}}.$ Such a $j_{1}$ has to exist as $\epsilon_{1}\cdots \epsilon_{M_{1}+N_{1}}$ is of finite length. Now we consider all codings of $x$ that begin with $\epsilon_{1}\cdots \epsilon_{M_{1}+N_{1}}$ and ask whether one of these codings contains the block $B_{j_{1}+1}.$ If there doesn't exist such a coding then $\lim_{N\to\infty} f_{\epsilon_{M_{1}+N_{1}+1}}\cdots f_{\epsilon_{N}}(0)\in U_{B_{j_{1}+1}},$ which implies $x\in f_{1}\cdots f_{\epsilon_{M_{1}+N_{1}}} (U_{B_{j_{1}}+1})$. If there does exist such a coding we denote it by $(\epsilon_{i}^{2})_{i=1}^{\infty}$ and let $M_{2}\in\mathbb{N}$ be such that $\epsilon_{M_{2}+1}^{2}\cdots \epsilon_{M_{2}+N_{j_{1}+1}}^{2}=B_{j_{1}+1}.$ We then define $j_{2}\in\mathbb{N}$ to be the unique natural number such that $B_{k}$ appears in $\epsilon_{1}^{2}\cdots \epsilon_{M_{2}+N_{j_{1}+1}}^{2}$ for all $1\leq k\leq j_{2},$ but $B_{j_{2}+1}$ does not appear. We then ask whether there exists a coding for $x$ beginning with $\epsilon_{1}^{2}\cdots \epsilon_{M_{2}+N_{j_{1}+1}}^{2}$ that contains the block $B_{j_{2}+1}$. If such a coding doesn't exist we stop, if one does exist we repeat the above steps. Assuming the above process does not terminate then at the $n$-th iteration we have constructed a finite sequence containing the blocks $B_{1},\ldots, B_{n},$ and this sequence can be extended to an element of $\Sigma_{\Lambda}(x).$ If this process continues indefinitely then we will construct a universal coding for $x$. However, as $x$ has no universal coding this algorithm must at some point terminate. This yields  $K, M(K)\in\mathbb{N}$ and $(\epsilon_{i})_{i=1}^{\infty}\in \Sigma_{\Lambda}(x)$ such that $\lim_{N\to \infty} f_{\epsilon_{M_{K}+1}}\cdots f_{\epsilon_{N}}(0)\in U_{B_{K}}.$ In which case $x\in f_{\epsilon_{1}}\cdots f_{\epsilon_{M_{K}}}(U_{B_{K}})$ and we may deduce the inclusion in (\ref{equation 4}).
\end{proof}
The right hand side of (\ref{equation 4}) in Proposition \ref{inclusion prop} might seem excessive. We might naively expect that if $x\in \Lambda$ does not have a universal coding then $x\in U_{B_{k}},$ for some $k.$ However, even if $x$ has no universal coding we cannot discount the possibility that for each $B_{k}$ there exists $(\epsilon_{i}^{k})_{i=1}^{\infty}\in \Sigma_{\Lambda}(x)$ containing $B_{k}$.

The following corollary is an immediate consequence of Proposition \ref{inclusion prop} and the fact that our $f_{j}$'s are all similitudes.

\begin{cor}
\label{Simplifying corollary}
 The following statements hold:
\begin{itemize}
 \item $\mathcal{L}(\{x\in \Lambda: \textrm{card }\Sigma_{\Lambda}(x)<2^{\aleph_{0}}\})=0$ if and only if $\mathcal{L}(U_{\Lambda})=0.$
\item $\dim_{H}(\{x\in \Lambda: \textrm{card }\Sigma_{\Lambda}(x)<2^{\aleph_{0}}\})=\dim_{H}(U_{\Lambda}).$
\item $\mathcal{L}( \{x\in \Lambda: x \textrm{ has no universal coding }\})=0$ if and only if $\mathcal{L}(U_{B_{k}})=0$ for every $B_{k}.$

\end{itemize}

\end{cor}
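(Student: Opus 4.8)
The plan is to deduce all three statements directly from the inclusions established in Proposition \ref{inclusion prop}, combined with two elementary properties of a composition of our maps. Any map $g=f_{\epsilon_{1}}\cdots f_{\epsilon_{N}}$ is a similitude of the form $x\mapsto cx+v$ with contraction ratio $c=\prod_{i=1}^{N}\lambda_{\epsilon_{i}}\in(0,1)$; consequently $g$ scales $d$-dimensional Lebesgue measure, so that $\mathcal{L}(g(A))=c^{d}\mathcal{L}(A)$ for every measurable $A$, and $g$ is bi-Lipschitz and therefore preserves Hausdorff dimension, $\dim_{H}(g(A))=\dim_{H}(A)$. In particular a similitude sends null sets to null sets. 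I would also record that the index set of all finite words $(\epsilon_{i})\in\{1,\ldots,n\}^{N}$ over $N\geq 0$ is countable, as is the collection $\{B_{k}\}_{k=1}^{\infty}$, so that the unions appearing on the right-hand sides of (\ref{equation 3}) and (\ref{equation 4}) are countable.

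For the first statement, the forward implication is immediate from (\ref{equation 1}) and monotonicity of $\mathcal{L}$. For the converse, assume $\mathcal{L}(U_{\Lambda})=0$; then each set $f_{\epsilon_{1}}\cdots f_{\epsilon_{N}}(U_{\Lambda})$ appearing in (\ref{equation 3}) is a similitude image of a null set and hence null, and countable subadditivity of Lebesgue measure over the countable union forces $\mathcal{L}(\{x\in\Lambda:\textrm{card }\Sigma_{\Lambda}(x)<2^{\aleph_{0}}\})=0$. The third statement is proved by the identical argument with (\ref{equation 2}) in place of (\ref{equation 1}) and (\ref{equation 4}) in place of (\ref{equation 3}): the forward direction follows by fixing each $B_{k}$, and the converse assumes $\mathcal{L}(U_{B_{k}})=0$ for all $k$, observes that every term on the right of (\ref{equation 4}) is a null similitude image, and sums over the countable triple union.

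The second statement follows the same template with Hausdorff dimension replacing Lebesgue measure. Monotonicity of $\dim_{H}$ applied to (\ref{equation 1}) gives $\dim_{H}(U_{\Lambda})\leq\dim_{H}(\{x\in\Lambda:\textrm{card }\Sigma_{\Lambda}(x)<2^{\aleph_{0}}\})$. For the reverse inequality I would use that each $f_{\epsilon_{1}}\cdots f_{\epsilon_{N}}$ preserves dimension, so $\dim_{H}(f_{\epsilon_{1}}\cdots f_{\epsilon_{N}}(U_{\Lambda}))=\dim_{H}(U_{\Lambda})$, together with the fact that the Hausdorff dimension of a countable union is the supremum of the dimensions of its members; applied to (\ref{equation 3}) this yields the opposite inequality, and equality follows. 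I do not anticipate any real obstacle, since Proposition \ref{inclusion prop} has absorbed all the substantive content; the only points requiring attention are verifying that the relevant unions are genuinely countable and that similitudes both preserve null sets and preserve Hausdorff dimension.
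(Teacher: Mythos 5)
Your proposal is correct and is precisely the argument the paper intends: the paper dismisses this corollary as an immediate consequence of Proposition \ref{inclusion prop} together with the fact that the $f_{j}$'s are similitudes, and your write-up simply makes explicit the routine details (countability of the unions, scaling of Lebesgue measure, countable subadditivity, bi-Lipschitz invariance and countable stability of Hausdorff dimension) that the paper leaves to the reader.
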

By Corollary \ref{Simplifying corollary}, to show that Theorems \ref{Uncountable thm}, \ref{Dimension thm} and \ref{Universal thm} hold, it suffices to show that equivalent statements hold for $U_{\Lambda}$ and a typical $U_{B_{k}}.$

We now elaborate on the technical condition $\sum_{j=1}^{n}\lambda_{j}^{d}\neq 1$ stated in Theorem \ref{Uncountable thm}.

\begin{prop}
\label{intersect prop}
Assume $\mathcal{L}(\Lambda)>0$. If $\sum_{j=1}^{n}\lambda_{j}^{d}=1$ then $\mathcal{L}(f_{k}(\Lambda)\cap f_{l}(\Lambda))=0$ for all $1\leq k<l\leq n.$ However, if $\sum_{j=1}^{n}\lambda_{j}^{d}\neq 1$ there exists $1\leq k<l\leq n$ such that $\mathcal{L}(f_{k}(\Lambda)\cap f_{l}(\Lambda))>0.$
\end{prop}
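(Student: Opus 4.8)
The plan is to reduce both halves of the statement to a single counting identity derived from the self-similarity relation $\Lambda = \bigcup_{j=1}^n f_j(\Lambda)$. Since each $f_j$ is a similitude with contraction ratio $\lambda_j$, we have $\mathcal{L}(f_j(A)) = \lambda_j^d \mathcal{L}(A)$ for every measurable $A$, and in particular $\mathcal{L}(f_j(\Lambda)) = \lambda_j^d \mathcal{L}(\Lambda)$. I would introduce the multiplicity function $m(x) := \sum_{j=1}^n \mathbf{1}_{f_j(\Lambda)}(x)$, which counts how many of the pieces $f_j(\Lambda)$ contain $x$. Because the pieces cover $\Lambda$, we have $m(x) \geq 1$ for every $x \in \Lambda$, while $m(x)=0$ off $\Lambda$.

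First I would integrate $m$. Interchanging the finite sum with the integral gives
\begin{equation*}
\Big(\sum_{j=1}^n \lambda_j^d\Big) \mathcal{L}(\Lambda) = \sum_{j=1}^n \mathcal{L}(f_j(\Lambda)) = \int_\Lambda m(x)\, d\mathcal{L}(x).
\end{equation*}
Subtracting $\mathcal{L}(\Lambda) = \int_\Lambda 1\, d\mathcal{L}$ from both sides yields the key identity
\begin{equation*}
\Big(\sum_{j=1}^n \lambda_j^d - 1\Big)\mathcal{L}(\Lambda) = \int_\Lambda \big(m(x)-1\big)\, d\mathcal{L}(x).
\end{equation*}
The integrand $m-1$ is nonnegative on $\Lambda$, so the right-hand side is nonnegative; this already recovers the stated fact that $\mathcal{L}(\Lambda)>0$ forces $\sum_j \lambda_j^d \geq 1$. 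More importantly, the right-hand side vanishes if and only if $m(x)=1$ for $\mathcal{L}$-almost every $x\in\Lambda$, that is, if and only if the set of points lying in two or more pieces has measure zero.

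The final ingredient is to link this integral to the pairwise intersections. I would observe that a point $x$ satisfies $m(x)\geq 2$ precisely when $x\in f_k(\Lambda)\cap f_l(\Lambda)$ for some $1\leq k<l\leq n$, so that
\begin{equation*}
\{x\in\Lambda : m(x)\geq 2\} = \bigcup_{1\leq k<l\leq n} \big(f_k(\Lambda)\cap f_l(\Lambda)\big).
\end{equation*}
Both claims then fall out by cases. If $\sum_j \lambda_j^d = 1$, the left-hand side of the key identity is $0$, forcing $\mathcal{L}(\{m\geq 2\})=0$ and hence $\mathcal{L}(f_k(\Lambda)\cap f_l(\Lambda))=0$ for every pair. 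If instead $\sum_j \lambda_j^d \neq 1$, then since $\mathcal{L}(\Lambda)>0$ already pins down $\sum_j \lambda_j^d \geq 1$, we must have $\sum_j \lambda_j^d > 1$; the left-hand side is then strictly positive, so $\mathcal{L}(\{m\geq 2\})>0$, and by finite subadditivity at least one intersection $f_k(\Lambda)\cap f_l(\Lambda)$ has positive measure.

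The argument is essentially bookkeeping once the counting identity is in place, so I do not expect a serious obstacle. The only points requiring care are the harmless interchange of the finite sum with the integral and the identification of $\{m\geq 2\}$ with the union of pairwise overlaps. The one genuinely needed input beyond the identity itself is that positivity of $\mathcal{L}(\Lambda)$ already forces $\sum_j \lambda_j^d \geq 1$, which upgrades the hypothesis $\sum_j \lambda_j^d \neq 1$ to $\sum_j \lambda_j^d > 1$ in the second case.
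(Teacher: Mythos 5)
Your proof is correct, and it takes a cleaner, unified route than the paper. The paper proves the two halves with two separate identities: for the case $\sum_j\lambda_j^d=1$ it uses the telescoped union formula $\mathcal{L}(\cup_j A_j)=\sum_j\mathcal{L}(A_j)-\sum_{i=2}^{n}\mathcal{L}(\cup_{j<i}A_j\cap A_i)$ and argues by contradiction, while for $\sum_j\lambda_j^d\neq 1$ it invokes the full alternating inclusion--exclusion expansion and concludes that a nonzero right-hand side forces some pairwise overlap to be positive (this step implicitly needs the remark that triple and higher intersections are contained in pairwise ones, so that if all pairwise overlaps were null every term would vanish). Your multiplicity-function identity
\begin{equation*}
\Big(\sum_{j=1}^n \lambda_j^d - 1\Big)\mathcal{L}(\Lambda) = \int_\Lambda \big(m(x)-1\big)\, d\mathcal{L}(x)
\end{equation*}
carries the same information as the paper's telescoped formula (integrating $m-1$ and telescoping produce the same quantity), so the first half of your argument is essentially the paper's in different clothing; the genuine divergence is in the second half, where the nonnegativity of $m-1$ lets you bypass the alternating-sign expansion entirely: the right-hand side is zero iff the overlap set $\{m\geq 2\}=\bigcup_{k<l}\big(f_k(\Lambda)\cap f_l(\Lambda)\big)$ is null, with no sign bookkeeping. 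A further small gain is that your identity yields $\sum_j\lambda_j^d\geq 1$ whenever $\mathcal{L}(\Lambda)>0$ as a byproduct, a fact the paper states separately as a ``simple exercise''; you then use it, correctly, to upgrade $\sum_j\lambda_j^d\neq 1$ to a strict inequality so the left-hand side is genuinely positive rather than merely nonzero.
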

\begin{proof}
It is a straighforward inductive argument to show that the following holds. Let $\{A_{j}\}_{j=1}^{n}$ be a finite collection of measurable sets with finite Lebesgue measure. Then 
\begin{equation}
\label{quasi inclusion}
\mathcal{L}(\cup_{j=1}^{n}A_{j})=\sum_{j=1}^{n}\mathcal{L}(A_{j})-\sum_{i=2}^{n} \mathcal{L}(\cup_{j=1}^{i-1}A_{j}\cap A_{i}).
\end{equation}
Let us assume $\sum_{j=1}^{n}\lambda_{j}^{d}=1$ and that there exists $1\leq k<l\leq n$ such that $\mathcal{L}(f_{k}(\Lambda)\cap f_{l}(\Lambda))>0.$ Without loss of generality we may assume that $k=1$ and $l=2$. We observe the following:
\begin{align*}
 \mathcal{L}(\Lambda)&=\mathcal{L}(\cup_{j=1}^{n}f_{j}(\Lambda))\\
&= \sum_{j=1}^{n}\mathcal{L}(f_{j}(\Lambda))-\sum_{i=2}^{n} \mathcal{L}(\cup_{j=1}^{i-1}f_{j}(\Lambda)\cap f_{i}(\Lambda))\\
&= \mathcal{L}(\Lambda)\sum_{j=1}^{n}\lambda_{j}^{d}-\sum_{i=2}^{n} \mathcal{L}(\cup_{j=1}^{i-1}f_{j}(\Lambda)\cap f_{i}(\Lambda)).
\end{align*}
In our second equality we have used equation (\ref{quasi inclusion}). It follows that $$0=\sum_{i=2}^{n} \mathcal{L}(\cup_{j=1}^{i-1}f_{j}(\Lambda)\cap f_{i}(\Lambda)).$$ However, this is not possible if $\mathcal{L}(f_{1}(\Lambda)\cap f_{2}(\Lambda))>0.$

Now let us assume that $\sum_{j=1}^{n}\lambda_{j}^{d}\neq 1$ and that $\mathcal{L}(\Lambda)>0.$ By the inclusion exclusion principle the following equation holds.
\begin{align*}
\mathcal{L}(\Lambda)&=\mathcal{L}(\cup_{j=1}^{n}f_{j}(\Lambda))\\
&= \sum_{j=1}^{n}\mathcal{L}(f_{j}(\Lambda))-\sum_{1\leq i<j\leq n }^{n}\mathcal{L}(f_{i}(\Lambda)\cap f_{j}(\Lambda))+\sum_{1\leq i<j<h\leq n}\mathcal{L}(f_{i}(\Lambda)\cap f_{j}(\Lambda)\cap f_{h}(\Lambda)) -\\
&\cdots +(-1)^{n-1}\mathcal{L}(\cap_{j=1}^{n}f_{j}(\Lambda)).
\end{align*}
Which by a simple manipulation implies 
\begin{align*}
\Big(\sum_{j=1}^{n}\lambda_{j}^{d}-1\Big)\mathcal{L}(\Lambda)&=\sum_{1\leq i<j\leq n }^{n}\mathcal{L}(f_{i}(\Lambda)\cap f_{j}(\Lambda))-\sum_{1\leq i<j<h\leq n}\mathcal{L}(f_{i}(\Lambda)\cap f_{j}(\Lambda)\cap f_{k}(\Lambda)) +\\
& \cdots +(-1)^{n}\mathcal{L}(\cap_{j=1}^{n}f_{j}(\Lambda)).
\end{align*} By our assumptions the left hand side of the above equation is not equal to zero. This implies the right hand side is also non zero and there must exist $1\leq k<l\leq n$ such that $\mathcal{L}(f_{k}(\Lambda)\cap f_{l}(\Lambda))>0.$
\end{proof}
We remark that if $x\in f_{k}(\Lambda)\cap f_{l}(\Lambda)$ then $x$ has at least two codings, one with first digit $k$ and one with first digit $l$. Moreover, it is straightforward to show that $x\in f_{\epsilon_{1}}\cdots f_{\epsilon_{N}}(f_{k}(\Lambda)\cap f_{l}(\Lambda))$ for some $(\epsilon_{i})_{i=1}^{N}\in\{1,\ldots,n\}^{N}$ and $1\leq k<l\leq n$ if and only if $x$ has at least two codings. This important remark will be used in the proof of the following corollary and later in our proof of Theorem \ref{Uncountable thm}.

\begin{cor}
\label{unique cor} 
Suppose $\sum_{j=1}^{n}\lambda_{j}^{d}=1$ and $\mathcal{L}(\Lambda)>0.$ Then almost every $x\in \Lambda$ has a unique coding.
\end{cor}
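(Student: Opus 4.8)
The plan is to show directly that the set of points possessing at least two codings is Lebesgue-null. Its complement in $\Lambda$ is then exactly $U_{\Lambda}$, the set of points with a unique coding, so almost every $x\in\Lambda$ will have a unique coding.

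First I would invoke the remark immediately preceding this corollary, which identifies the set of points having at least two codings as
$$\bigcup_{N=0}^{\infty}\bigcup_{(\epsilon_{i})\in\{1,\ldots,n\}^{N}}\bigcup_{1\leq k<l\leq n} f_{\epsilon_{1}}\cdots f_{\epsilon_{N}}\big(f_{k}(\Lambda)\cap f_{l}(\Lambda)\big).$$
This is a countable union, since there are only countably many finite words $(\epsilon_{i})_{i=1}^{N}$ and, for each such word, only finitely many pairs $k<l$.

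Next I would estimate the measure of a single term. Each composition $f_{\epsilon_{1}}\cdots f_{\epsilon_{N}}$ is a similitude with contraction ratio $\lambda_{\epsilon_{1}}\cdots\lambda_{\epsilon_{N}}$, so it scales $d$-dimensional Lebesgue measure by the factor $(\lambda_{\epsilon_{1}}\cdots\lambda_{\epsilon_{N}})^{d}$. Hence
$$\mathcal{L}\big(f_{\epsilon_{1}}\cdots f_{\epsilon_{N}}(f_{k}(\Lambda)\cap f_{l}(\Lambda))\big)=(\lambda_{\epsilon_{1}}\cdots\lambda_{\epsilon_{N}})^{d}\,\mathcal{L}\big(f_{k}(\Lambda)\cap f_{l}(\Lambda)\big).$$
Since $\sum_{j=1}^{n}\lambda_{j}^{d}=1$ and $\mathcal{L}(\Lambda)>0$, Proposition \ref{intersect prop} gives $\mathcal{L}(f_{k}(\Lambda)\cap f_{l}(\Lambda))=0$ for every $1\leq k<l\leq n$, so each term above vanishes.

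Finally, a countable union of Lebesgue-null sets is Lebesgue-null, so the set of points with at least two codings has measure zero; its complement in $\Lambda$ therefore has full measure and is precisely the set of points with a unique coding, which is the assertion. The argument is essentially immediate once Proposition \ref{intersect prop} is in hand, so I do not anticipate a genuine obstacle. The only point requiring care is the claim that the displayed countable union really captures \emph{every} point with two or more codings; this is exactly the content of the preceding remark, which I would simply cite rather than reprove.
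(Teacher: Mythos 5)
Your proposal is correct and follows exactly the paper's own argument: it cites the remark identifying the set of points with multiple codings as the countable union $\bigcup_{k<l}\bigcup_{N}\bigcup_{(\epsilon_{i})} f_{\epsilon_{1}}\cdots f_{\epsilon_{N}}(f_{k}(\Lambda)\cap f_{l}(\Lambda))$, applies Proposition \ref{intersect prop} to conclude each intersection $f_{k}(\Lambda)\cap f_{l}(\Lambda)$ is null, and uses the similitude scaling together with countable subadditivity to conclude the union is null. The only difference is that you spell out the measure-scaling computation that the paper leaves as ``an immediate consequence,'' which is fine.
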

\begin{proof}
By the above remarks the following equality holds $$\Big\{x\in \Lambda:  \textrm{card }\Sigma_{\Lambda}(x)>1\Big\}= \bigcup_{1\leq k<l\leq n}\bigcup_{N=0}^{\infty}\bigcup_{(\epsilon_{i})\in\{1,\ldots, n\}^{N}} f_{\epsilon_{1}}\cdots f_{\epsilon_{N}}(f_{k}(\Lambda)\cap f_{l}(\Lambda)).$$ It is an immediate consequence of this equality, the fact that our $f_{j}$'s are all similitudes, and Proposition \ref{intersect prop} that $\mathcal{L}(\{x\in \Lambda:  \textrm{card }\Sigma_{\Lambda}(x)>1\})=0.$
\end{proof}

\section{Proof of Theorems \ref{Uncountable thm}, \ref{Dimension thm} and \ref{Universal thm}}
We begin by proving Theorems \ref{Uncountable thm} and \ref{Universal thm}. Their proofs will depend on an application of the Lebesgue density theorem. The Lebesgue density theorem states that if $E\subset \mathbb{R}^{d}$ is a Lebesgue measurable set, then for almost every $x\in E$ $$\lim_{r\to 0}\frac{\mathcal{L}(E\cap B_{r}(x))}{\mathcal{L}(B_{r}(x))}=1.$$ Here $B_{r}(x)$ denotes the closed $d$-dimensional ball in $\mathbb{R}^{d}$ with radius $r$ centred at $x.$ This statement is of course vacuous if $\mathcal{L}(E)=0.$ It is an immediate consequence of the Lebesgue density theorem that if $E\subset \mathbb{R}^{d}$ is such that every $x\in E$ satisfies $$\limsup_{r\to 0}\frac{\mathcal{L}(E\cap B_{r}(x))}{\mathcal{L}(B_{r}(x))}<1,$$ then $\mathcal{L}(E)=0.$ This will be the strategy will employ when it comes to proving Theorems \ref{Uncountable thm} and \ref{Universal thm}.

\begin{proof}[Proof of Theorem \ref{Uncountable thm}]
By Corollary \ref{Simplifying corollary} it suffices to show $\mathcal{L}(U_{\Lambda})=0.$ By Proposition \ref{intersect prop} we may assume that $1\leq k<l\leq n$ are such that $\mathcal{L}(f_{k}(\Lambda)\cap f_{l}(\Lambda))>0$. 

We now fix $x\in U_{\Lambda},$ and let $(\epsilon_{i})_{i=1}^{\infty}$ be its unique coding. Given $r>0$ we associate the unique $n(r)\in\mathbb{N}$ satisfying $$Diam(\Omega)\prod_{i=1}^{n(r)}\lambda_{\epsilon_{i}}<r\leq Diam(\Omega)\prod_{i=1}^{n(r)-1}\lambda_{\epsilon_{i}}.$$It is a consequence of these inequalities that $f_{\epsilon_{1}}\cdots f_{\epsilon_{n(r)}}(\Lambda)\subset B_{r}(x)$. 

We observe the following:
\begin{align*}
\frac{\mathcal{L}(U_{\Lambda}\cap B_{r}(x))}{\mathcal{L}(B_{r}(x))}&=1- \frac{\mathcal{L}(U_{\Lambda}^{c}\cap B_{r}(x))}{\mathcal{L}(B_{r}(x))}\\
&\leq 1- \frac{\mathcal{L}(U_{\Lambda}^{c}\cap f_{\epsilon_{1}}\cdots f_{\epsilon_{n(r)}}(\Lambda))}{\mathcal{L}(B_{r}(x))}\\
&\leq 1- \frac{\mathcal{L}(f_{\epsilon_{1}}\cdots f_{\epsilon_{n(r)}}(f_{k}(\Lambda)\cap f_{l}(\Lambda)))}{\mathcal{L}(B_{r}(x))}\\
&=1-\frac{\mathcal{L}(f_{k}(\Lambda)\cap f_{l}(\Lambda))\prod_{i=1}^{n(r)}\lambda_{\epsilon_{i}}^{d}}{C(d)r^{d}}\\
&\leq 1- \frac{\mathcal{L}(f_{k}(\Lambda)\cap f_{l}(\Lambda))\prod_{i=1}^{n(r)}\lambda_{\epsilon_{i}}^{d}}{C(d)(Diam(\Omega)\prod_{i=1}^{n(r)-1}\lambda_{\epsilon_{i}})^{d}}\\
&=1-\frac{\mathcal{L}(f_{k}(\Lambda)\cap f_{l}(\Lambda))\lambda_{n(r)}^{d}}{C(d)Diam(\Omega)^{d}}\\
&=1-\frac{\mathcal{L}(f_{k}(\Lambda)\cap f_{l}(\Lambda))\min_{1\leq j\leq n}\{\lambda_{j}^{d}\}}{C(d)Diam(\Omega)^{d}}.
\end{align*}
In the third line of the above we have used the fact that $f_{\epsilon_{1}}\cdots f_{\epsilon_{n(r)}}(f_{k}(\Lambda)\cap f_{l}(\Lambda))\subset f_{1}\cdots f_{\epsilon_{n(r)}}(\Lambda)$ and $f_{\epsilon_{1}}\cdots f_{\epsilon_{n(r)}}(f_{k}(\Lambda)\cap f_{l}(\Lambda))\subset U_{\Lambda}^{c}.$ Here $C(d)$ is the $d$-dimensional volume of the unit sphere. Clearly the upper density can therefore always be bounded above by some positive constant strictly less than $1$. Which by our earlier remarks implies $\mathcal{L}(U_{\Lambda})=0.$

\end{proof}
By Corollary \ref{Simplifying corollary} to prove Theorem \ref{Universal thm} it suffices to show $\mathcal{L}(U_{B_{k}})=0$ for each $B_{k}.$ This will follow from an analogous application of the Lebesgue density theorem. The role of $f_{k}(\Lambda)\cap f_{l}(\Lambda)$ is played by $f_{\epsilon_{1}}\cdots f_{\epsilon_{N_{k}}}(\Lambda)$ where $B_{k}= \epsilon_{1}\cdots \epsilon_{N_{k}}.$ Clearly $f_{\epsilon_{1}}\cdots f_{\epsilon_{N_{k}}}(\Lambda) \not\subset U_{B_{k}},$ it has measure $\mathcal{L}(\Lambda)\prod_{i=1}^{N_{k}}\lambda_{\epsilon_{i}}^{d}$ and its image under any finite sequence of $f_{j}$'s will also be in the complement of $U_{B_{k}}.$

We now prove Theorem \ref{Dimension thm}. The proof of this theorem is analogous to the proof of Theorem \ref{Nik's theorem} with one minor alteration. We begin by stating a lemma whose proof can be found in \cite{Sidorov4}.

\begin{lemma}
 \label{Nik lemma}
Let $A\subset \mathbb{R}^{d}$ be such that there exists a positive constant $\delta>0$ such that for an arbitrary cube $C\subset \mathbb{R}^{d}$ which intersects $A,$ one can find a cube $C_{0}\subset C$ such that $\mathcal{L}(C_{0})\geq \delta \mathcal{L}(C)$ and $C_{0}\cap A=\emptyset.$ Then $\dim_{H}(A)<d.$
\end{lemma}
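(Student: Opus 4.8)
The plan is to recognise the hypothesis as a \emph{uniform porosity} condition holding at all scales, and to show that any set satisfying it has upper box-counting dimension, and hence Hausdorff dimension, strictly below $d$. Since Hausdorff dimension is countably stable and $\mathbb{R}^{d}$ is a countable union of unit cubes, it suffices to bound $\dim_{H}(A\cap C_{1})$ by a single constant $s<d$ for an arbitrary fixed cube $C_{1}$; I may therefore assume from the outset that $A$ is contained in a fixed cube $C_{1}$ of side $r_{0}$.

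First I would set up an iterated subdivision. Fix an integer $M\geq 2$, to be chosen large, and at each stage subdivide every cube into $M^{d}$ congruent subcubes of $1/M$ the side length. Let $N_{m}$ denote the number of the cubes of side $r_{0}M^{-m}$ produced at stage $m$ that still intersect $A$. The crux is a one-step estimate: if $Q$ is such a cube meeting $A$, the hypothesis supplies a subcube $C_{0}\subseteq Q$ with $C_{0}\cap A=\emptyset$ whose side length is at least $\delta^{1/d}$ times that of $Q$. Among the $M^{d}$ children of $Q$, every child contained in $C_{0}$ is disjoint from $A$, and an axis-parallel cube of relative side $\delta^{1/d}$ contains at least $(\delta^{1/d}M-2)^{d}$ of them. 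Choosing $M$ large enough that $(\delta^{1/d}M-2)^{d}\geq \tfrac{\delta}{2}M^{d}$, the number of children of $Q$ meeting $A$ is at most $\theta M^{d}$, where $\theta:=1-\tfrac{\delta}{2}\in(0,1)$. Summing over the stage-$m$ cubes gives the recursion $N_{m+1}\leq \theta M^{d}N_{m}$, whence $N_{m}\leq(\theta M^{d})^{m}$.

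Finally I would convert this count into a dimension bound. The $N_{m}$ surviving cubes cover $A$ and each has diameter $\sqrt{d}\,r_{0}M^{-m}$, so for any exponent $s$,
\[
N_{m}\big(\sqrt{d}\,r_{0}M^{-m}\big)^{s}\leq (\theta M^{d})^{m}\big(\sqrt{d}\,r_{0}M^{-m}\big)^{s}=(\sqrt{d}\,r_{0})^{s}\big(\theta M^{d-s}\big)^{m}.
\]
Taking $s:=d-\dfrac{\log(1/\theta)}{\log M}$ makes $\theta M^{d-s}=1$, so these sums stay bounded by $(\sqrt{d}\,r_{0})^{s}$ while the mesh tends to $0$; hence $\mathcal{H}^{s}(A\cap C_{1})<\infty$ and $\dim_{H}(A\cap C_{1})\leq s<d$. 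Countable stability then yields $\dim_{H}(A)\leq s<d$. The one point requiring care is purely geometric: the porosity cube $C_{0}$ need not align with the subdivision grid, which is precisely why one sends $M\to\infty$, so that the misalignment costs only a negligible boundary layer (the $-2$ above). Should the cubes in the hypothesis be permitted to have arbitrary orientation, one simply inscribes an axis-parallel cube in $C_{0}$ and replaces $\delta$ by $\delta d^{-d/2}$, leaving the argument otherwise unchanged.
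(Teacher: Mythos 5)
Your proof is correct, and it is essentially the approach the paper intends: the paper does not prove Lemma \ref{Nik lemma} itself but defers to \cite{Sidorov4}, remarking only that the result "follows from a box counting argument," which is precisely the iterated-subdivision counting scheme you carry out (with the mesh-misalignment and orientation issues handled properly). The recursion $N_{m+1}\leq\theta M^{d}N_{m}$, the choice $s=d-\log(1/\theta)/\log M$, and the reduction via countable stability are all sound, so nothing further is needed.
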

The proof of Lemma \ref{Nik lemma} is fairly straighforward and follows from a box counting argument.

\begin{proof}[Proof of Theorem \ref{Dimension thm}]
 By Corollary \ref{Simplifying corollary} it suffices to show $\dim_{H}(U_{\Lambda})<d.$ We now show that Lemma \ref{Nik lemma} can be applied with $A=U_{\Lambda}.$ By our assumption $f_{k}(\Lambda)\cap f_{l}(\Lambda)$ has nonempty interior and therefore contains a $d$-dimensional cube that we shall denote by $C^{*}.$ We will show that we can take  $$\delta=\min\Big\{2^{-d},\frac{\mathcal{L}(C^{*})\min_{1\leq j\leq n}\{\lambda_{j}^{d}\}}{(2Diam(\Omega))^{d}}\Big\}.$$

Let $C(z,r)$ denote the cube in $\mathbb{R}^{d}$ centred at $z$ with side length $r.$ Suppose $C(z,r)$ intersects $U_{\Lambda}$. We ask whether $U_{\Lambda}$ intersects $C(z,\frac{r}{2}).$ If it doesn't we can take $C_{0}=C(z,\frac{r}{2})$ and $\mathcal{L}(C_{0})=2^{-d}\mathcal{L}(C(z,r)).$ Suppose otherwise, let $x\in U_{\Lambda}\cap C(z,\frac{r}{2})$ and $(\epsilon_{i})_{i=1}^{\infty}\in \Sigma_{\Lambda}(x).$ We let $n(r)\in\mathbb{N}$ denote the unique natural number satisfying the following inequalities $$Diam(\Omega)\prod_{i=1}^{n(r)}\lambda_{\epsilon_{i}}<\frac{r}{2}\leq Diam(\Omega)\prod_{i=1}^{n(r)-1}\lambda_{\epsilon_{i}}.$$ Clearly $f_{\epsilon_{1}}\cdots f_{\epsilon_{n(r)}}(\Lambda)\subset C(z,r)$ and therefore $f_{\epsilon_{1}}\cdots f_{\epsilon_{n(r)}}(C^{*})\subset C(z,r).$ Moreover $f_{\epsilon_{1}}\cdots f_{\epsilon_{n(r)}}(C^{*})$ is a cube and it is contained in $U_{\Lambda}^{c}.$ Finally we observe $$\mathcal{L}(f_{1}\cdots f_{\epsilon_{n(r)}}(C^{*}))= \mathcal{L}(C^{*})\prod_{i=1}^{n(r)}\lambda_{\epsilon_{i}}^{d}\geq \frac{r^{d}\mathcal{L}(C^{*})\lambda_{\epsilon_{n(r)}}^{d}}{(2Diam(\Omega))^{d}}\geq \frac{\mathcal{L}(C^{*})\min_{1\leq j\leq n}\{\lambda_{j}^{d}\}}{(2Diam(\Omega))^{d}}\mathcal{L}(C(z,r)).$$ Taking $C_{0}=f_{\epsilon_{1}}\cdots f_{\epsilon_{n(r)}}(C^{*})$ we see that our value for $\delta$ applies. Applying Lemma \ref{Nik lemma} yields our result.
\end{proof}

\section{Applications to $\lambda$-expansions with deleted digits}
Instead of considering $\lambda$-expansions where $\lambda\in(\frac{1}{2},1)$ and our sequences are elements of $\{0,1\}^{\mathbb{N}},$ we can consider the more general case where $\lambda\in(0,1)$ and the elements of our sequences are elements of $\mathcal{A}=\{a_{1},\ldots, a_{n}\}.$ Here $a_{j}\in \mathbb{R}$ for all $1\leq j \leq n,$ and without loss of generality we may assume that $a_{1}<\cdots<a_{n}.$ We refer to $\mathcal{A}$ as our \textit{alphabet}. Given $x\in[\frac{a_{1}\lambda}{1-\lambda},\frac{a_{n}\lambda}{1-\lambda}]$ we say that a sequence $(\epsilon_{i})_{i=1}^{\infty}\in \mathcal{A}^{\mathbb{N}}$ is a \textit{$\lambda$-expansion for $x$ with respect to $\mathcal{A}$} if $$x=\sum_{i=1}^{\infty}\epsilon_{i}\lambda^{i}.$$ We define the analogue of a universal expansion with respect to $\mathcal{A}$ in the natural way. Pedicini in \cite{Ped} showed that every $x\in [\frac{a_{1}\lambda}{1-\lambda},\frac{a_{n}\lambda}{1-\lambda}]$ has a $\lambda$-expansion with respect to $\mathcal{A}$ if and only if $$\max_{1\leq j\leq n-1}(a_{j+1}-a_{j})\leq \frac{\lambda(a_{m}-a_{1})}{1-\lambda}.$$ To the alphabet $\mathcal{A}$ we associate the set of maps $\{f_{j}\}_{j=1}^{n}$ where $f_{j}(x)=\lambda x +\lambda a _{j}.$ It is straightforward to show that $(\epsilon_{i})_{i=1}^{\infty}\in\{1,\ldots,n\}^{\mathbb{N}}$ is a coding for $x$ if and only if $(a_{\epsilon_{i}})_{i=1}^{\infty}\in \mathcal{A}^{\mathbb{N}}$ is a $\lambda$-expansion of $x$ with respect to the alphabet $\mathcal{A}.$ Therefore $\Lambda$ coincides with the set of points that have a $\lambda$-expansion with respect to this alphabet. As such, when the Pedicini condition is satisfied $\Lambda=[\frac{a_{1}\lambda}{1-\lambda},\frac{a_{n}\lambda}{1-\lambda}].$ In which case Theorem \ref{Universal thm} applies and we have the following result.

\begin{thm}
 Let $\mathcal{A}=\{a_{1},\ldots,a_{n}\}$ and suppose that $\lambda\in(0,1)$ is such that the Pedicini condition is satisfied. Then almost every $x\in [\frac{a_{1}\lambda}{1-\lambda},\frac{a_{n}\lambda}{1-\lambda}]$ has a universal expansion with respect to $\mathcal{A}$.
\end{thm}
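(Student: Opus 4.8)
The plan is to deduce this directly from Theorem \ref{Universal thm} via the correspondence between $\lambda$-expansions with respect to $\mathcal{A}$ and codings of the associated attractor that is set up in the paragraph preceding the statement. The essential observation is that all the hypotheses of Theorem \ref{Universal thm} are supplied by the Pedicini condition, and that universality is a property preserved under the relabelling $j\leftrightarrow a_{j}$.

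First I would make the reduction precise. Let $\{f_{j}\}_{j=1}^{n}$ be the IFS with $f_{j}(x)=\lambda x+\lambda a_{j}$, which is of the form (\ref{scaling equation}) in dimension $d=1$ with common ratio $\lambda$ and translates $p_{j}=\frac{\lambda a_{j}}{1-\lambda}$, so the results of Sections 2 and 3 apply to its attractor $\Lambda$. By the observation recorded in this section, $\Lambda$ coincides with the set of points possessing a $\lambda$-expansion with respect to $\mathcal{A}$. When the Pedicini condition holds, the result of Pedicini \cite{Ped} guarantees that every point of $[\frac{a_{1}\lambda}{1-\lambda},\frac{a_{n}\lambda}{1-\lambda}]$ has such an expansion, whence $\Lambda=[\frac{a_{1}\lambda}{1-\lambda},\frac{a_{n}\lambda}{1-\lambda}]$. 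Since $a_{1}<a_{n}$, this is a nondegenerate interval, so $\mathcal{L}(\Lambda)>0$ and the hypothesis of Theorem \ref{Universal thm} is satisfied.

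Next I would invoke Theorem \ref{Universal thm} to conclude that almost every $x\in\Lambda$ admits a universal coding $(\epsilon_{i})_{i=1}^{\infty}\in\{1,\ldots,n\}^{\mathbb{N}}$, and then transport this statement along the bijection $j\mapsto a_{j}$. If $(\epsilon_{i})_{i=1}^{\infty}$ is a coding for $x$, then $(a_{\epsilon_{i}})_{i=1}^{\infty}$ is a $\lambda$-expansion of $x$ with respect to $\mathcal{A}$, and conversely. Because $j\mapsto a_{j}$ is a bijection of $\{1,\ldots,n\}$ onto $\mathcal{A}$, every finite block $\delta_{1}\cdots\delta_{N}$ over $\mathcal{A}$ is the image of a unique block over $\{1,\ldots,n\}$; hence the assertion that $(\epsilon_{i})$ contains every block over $\{1,\ldots,n\}$ is precisely the assertion that $(a_{\epsilon_{i}})$ contains every block over $\mathcal{A}$. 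Thus a universal coding corresponds to a universal expansion, and the almost-everywhere statement transfers verbatim to the interval.

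The argument is essentially bookkeeping, so I do not expect a serious obstacle. The only step warranting a word of care is this final translation, which is immediate once one notes that relabelling digits by a bijection neither creates nor destroys occurrences of finite blocks. The sole degenerate case to exclude is $n=1$, where the interval collapses to a point and universality is vacuous; this is tacitly ruled out by the standing assumption that $\mathcal{A}$ contains at least two distinct digits.
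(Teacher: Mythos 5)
Your proof is correct and takes essentially the same route as the paper: associate to $\mathcal{A}$ the IFS $f_{j}(x)=\lambda x+\lambda a_{j}$, observe that the Pedicini condition forces $\Lambda=[\frac{a_{1}\lambda}{1-\lambda},\frac{a_{n}\lambda}{1-\lambda}]$ (hence $\mathcal{L}(\Lambda)>0$), and apply Theorem \ref{Universal thm}. The additional bookkeeping you supply, checking the IFS has the form (\ref{scaling equation}) and transferring universality across the relabelling $j\mapsto a_{j}$, is detail the paper leaves implicit, but the argument is identical.
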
It was previously shown in \cite{Sidorov4} that when the Pedicini condition is satisfied and there exists $j$ for which $(a_{j+1}-a_{j})< \frac{\lambda(a_{m}-a_{1})}{1-\lambda},$ then almost every $x\in[\frac{a_{1}\lambda}{1-\lambda},\frac{a_{n}\lambda}{1-\lambda}]$ has a continuum of expansions. 

We now show that our results also translate over to the case of $\lambda$-expansions where the Pedicini condition is not satisfied. We now fix our alphabet to be $\mathcal{A}=\{0,1,3\}.$ Let $$I_{\lambda,\mathcal{A}}:=\Big\{x: x=\sum_{i=1}^{\infty}\epsilon_{i}\lambda^{i} \textrm{ for some } (\epsilon_{i})_{i=1}^{\infty}\in\mathcal{A}^{\mathbb{N}}\Big\}$$ The study of $\lambda$-expansions with respect to this alphabet and the set $I_{\lambda,\mathcal{A}}$ has received a lot of attention. We refer the reader to \cite{PolSim} and the references therein. In \cite{Sol} it was shown that for almost every $\lambda\in (\frac{1}{3},\frac{4}{5})$ the Lebesgue measure of $I_{\lambda,\mathcal{A}}$ is positive. Applying Theorems \ref{Uncountable thm} and \ref{Universal thm} we have the following result.

\begin{thm}
For almost every $\lambda\in(\frac{1}{3},\frac{4}{5})$ almost every $x\in I_{\lambda,\mathcal{A}}$ has a continuum of $\lambda$-expansions and a universal expansion.
\end{thm}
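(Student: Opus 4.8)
The plan is to recognise $I_{\lambda,\mathcal{A}}$ as the attractor $\Lambda$ of the IFS generated by the similitudes $f_{j}(x)=\lambda x+\lambda a_{j}$ with $\mathcal{A}=\{a_{1},a_{2},a_{3}\}=\{0,1,3\}$, so that $n=3$ and $d=1$, and then to invoke Theorems \ref{Uncountable thm} and \ref{Universal thm} directly. By the correspondence recorded earlier in this section, $(\epsilon_{i})_{i=1}^{\infty}\in\{1,2,3\}^{\mathbb{N}}$ is a coding of $x$ for this IFS if and only if $(a_{\epsilon_{i}})_{i=1}^{\infty}$ is a $\lambda$-expansion of $x$ with respect to $\mathcal{A}$, and this bijection carries codings with the continuum or universality property to expansions with the same property. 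Thus it suffices to produce, for almost every $\lambda$, a continuum of codings and a universal coding for almost every $x\in\Lambda$.

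First I would pin down the regime in which the hypotheses of the two theorems hold. Both theorems require $\mathcal{L}(\Lambda)>0$, which is exactly the content of the result of \cite{Sol}: it guarantees $\mathcal{L}(I_{\lambda,\mathcal{A}})>0$ for almost every $\lambda\in(\frac{1}{3},\frac{4}{5})$. For Theorem \ref{Uncountable thm} we additionally need $\sum_{j=1}^{n}\lambda_{j}^{d}\neq 1$. Here this sum equals $\sum_{j=1}^{3}\lambda=3\lambda$, so the condition reads $3\lambda\neq 1$, i.e.\ $\lambda\neq\frac{1}{3}$. Since we work on the \emph{open} interval $(\frac{1}{3},\frac{4}{5})$ we always have $\lambda>\frac{1}{3}$ and hence $3\lambda>1$, so this hypothesis is automatically satisfied for every $\lambda$ in the interval.

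With these verifications in place the argument closes immediately. For each of the almost every $\lambda\in(\frac{1}{3},\frac{4}{5})$ for which $\mathcal{L}(\Lambda)>0$, Theorem \ref{Uncountable thm} gives that almost every $x\in\Lambda$ has a continuum of codings, while Theorem \ref{Universal thm} (which needs only $\mathcal{L}(\Lambda)>0$) gives that almost every $x\in\Lambda$ has a universal coding; intersecting these two full-measure subsets of $\Lambda$ shows almost every $x$ enjoys both simultaneously. Translating back through the coding/expansion bijection yields the stated result.

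The point to be careful about is not an analytic obstacle but a bookkeeping one. One must keep the two layers of ``almost every'' separate, applying the fixed-$\lambda$ theorems only after restricting to the full-measure set of $\lambda$ supplied by \cite{Sol}, and one must confirm that the borderline case $3\lambda=1$ is excluded, which holds precisely because the left endpoint $\frac{1}{3}$ is omitted from the interval. Beyond these observations there is no further difficulty, since all of the substantive work is carried by Theorems \ref{Uncountable thm} and \ref{Universal thm} together with the measure-theoretic input of \cite{Sol}.
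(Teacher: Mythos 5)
Your proposal is correct and follows exactly the paper's own route: identify $I_{\lambda,\mathcal{A}}$ with the attractor of the IFS $f_{j}(x)=\lambda x+\lambda a_{j}$, invoke the result of \cite{Sol} to get $\mathcal{L}(I_{\lambda,\mathcal{A}})>0$ for almost every $\lambda\in(\frac{1}{3},\frac{4}{5})$, and then apply Theorems \ref{Uncountable thm} and \ref{Universal thm}. Your explicit verification that $\sum_{j=1}^{3}\lambda_{j}^{d}=3\lambda\neq 1$ on the open interval is a detail the paper leaves implicit, and including it is a welcome touch of care rather than a deviation.
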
We remark that for all $\lambda\in(\frac{1}{3},\frac{4}{5})$ the Pedicini condition is not satisfied. The above theorem therefore demonstrates cases where the Pedicini condition is not satisfied yet almost every $x\in \Lambda$ has a continuum of $\lambda$-expansions and a universal expansion.

\section{Open problems}

We conclude by posing some open questions and giving some general discussion.

\begin{itemize}
\item Let $\lambda\in(\frac{1}{2},1)$ and $$X(\lambda):=\Big\{\sum_{i=0}^{n}\epsilon_{i}\lambda^{-i}: \epsilon_{i}\in\{0,1\} \textrm{ and } n=0,1,\ldots \Big\}.$$ $X(\lambda)$ is a discrete set and may therefore be written as $\{y_{k}(\lambda)\}_{k=1}^{\infty}$ where $y_{1}(\lambda)<y_{2}(\lambda)<\ldots$. We introduce the following limits $$l(\lambda)=\liminf_{k\to \infty} y_{k+1}(\lambda)-y_{k}(\lambda) \textrm{ and }  L(\lambda)=\limsup_{k\to \infty} y_{k+1}(\lambda)-y_{k}(\lambda).$$ The set $X(\lambda)$ and the limits $l(\lambda)$ and $L(\lambda)$ have received a lot of attention. For more information on this topic we refer the reader to \cite{Erdos}, \cite{ErdosKomornik}, \cite{AkiKom} and the references therein. The classification of those $\lambda$ for which $l(\lambda)=0$ was completed in a recent paper by Feng, see \cite{Feng}. It was shown that $l(\lambda)=0$ if and only if $\lambda^{-1}$ is not a Pisot number. However, we are interested in a result stated in \cite{ErdosKomornik} which states that every $x\in(0,\frac{\lambda}{1-\lambda})$ has a universal expansion with respect to the alphabet $\{0,1\}$ if $L(\lambda)=0.$ Given this connection between the set $X(\lambda)$ and the existence of universal expansions the following question seems natural: For a general $\Lambda$ can we construct a set which is in some sense natural, and plays a similar role as $X(\lambda)$ does for $\lambda$-expansions? That is, does there exist $E\subset \mathbb{R}^{d}$ for which some sort of clustering property occuring within $E$ as we get further away from the origin implies the existence of universal codings for every point in $\textrm{int}(\Omega)\cap\Lambda.$ The author expects that such a set $E$ will exist. Our main motivation for posing this question is that we anticipate once we know how to define $E$ lots of other interesting question will arise. For example, once the analogues of $l(\lambda)$ and $L(\lambda)$ are established, when do they equal zero?
\item As stated earlier we can construct a self-similar set with positive Lebesgue measure when $\sum_{j=1}^{n}\lambda_{j}^{d}=1$. However, the example we gave was somewhat unsatisfactory. When $p_{1}=0,$ $p_{2}=1$ and $\lambda=\frac{1}{2}$ the images of $f_{1}([0,1])$ and $f_{2}([0,1])$ intersect in a trivial way. We would be very interested to know whether there exists an example of a self-similar set with positive Lebesgue measure when $\sum_{j=1}^{n}\lambda_{j}^{d}= 1$ for which the overlaps are nontrivial. More specifically, does there exist a self-similar set with positive Lebesgue measure when $\sum_{j=1}^{n}\lambda_{j}^{d}= 1$ for which there exists $1\leq k<l\leq n$ such that $f_{k}(\Omega)\cap f_{l}(\Omega)$ has nonempty interior. 
\item In the case of $\lambda$-expansions with respect to the alphabet $\{0,1\}$ what can be said about the Hausdorff dimension of the set of $x\in I_{\lambda}$ with no universal expansion. For $\lambda$ sufficiently close to one it can be shown that $L(\lambda)=0$ and the set of points that do not have a universal expansion are precisely the endpoints of $I_{\lambda}.$ However, we can assert that the Hausdorff dimension is positive when $\lambda\in (\frac{1}{2}, \lambda^{*}),$ where $\lambda^{*}$ is the Komornik Loreti constant. This is a straightforward consequence of the fact that $x\in I_{\lambda}$ with a unique $\lambda$-expansion cannot be universal, combined with the aforementioned results of \cite{GlenSid} which state that for $\lambda\in(\frac{1}{2}, \lambda^{*})$ the Hausdorff dimension of the set of $x\in I_{\lambda}$ with unique $\lambda$-expansion is positive. In particular, we would be interested in determining for which values of $\lambda\in(\frac{1}{2},1)$ is the Hausdorff dimension of the set of points with no universal expansion positive. 
\end{itemize}

\noindent \textbf{Acknowledgements} The author would like to thank Tom Kempton and Nikita Sidorov for useful discussions. This work was supported by the Dutch Organisation for Scientiﬁc Research (NWO) grant number
613.001.022.

\end{document}